\theoremstyle{thmstyleone}%
\newtheorem{theorem}{Theorem}
\newtheorem{proposition}[theorem]{Proposition}%
\newtheorem{lemma}[theorem]{Lemma}%
\theoremstyle{thmstyletwo}%
\newtheorem{assumption}{Assumption}
\newtheorem{remark}{Remark}%
\theoremstyle{definition}%
\newtheorem{definition}{Definition}%
\DeclareMathOperator*{\argmin}{\arg\min}
\newcommand{\rev}[1]{{#1}}
\begin{document}

\title[On the Computation of the Efficient Frontier in Advanced Sparse Portfolio Optimization]{On the Computation of the Efficient Frontier in Advanced Sparse Portfolio Optimization}


\author[1]{\fnm{Arturo} \sur{Annunziata}}\email{arturo.annunziata@unifi.it}

\author[1]{\fnm{Matteo} \sur{Lapucci}}\email{matteo.lapucci@unifi.it}

\author[1]{\fnm{Pierluigi} \sur{Mansueto}}\email{pierluigi.mansueto@unifi.it}

\author*[1]{\fnm{Davide} \sur{Pucci}}\email{davide.pucci@unifi.it}

\affil[1]{\orgdiv{Global Optimization Laboratory -- Department of Information Engineering (DINFO)}, \orgname{University of Florence}, \orgaddress{\street{Via di Santa Marta, 3}, \city{Florence}, \postcode{50139}, \country{Italy}}}


\abstract{In this work, we deal with the problem of computing a comprehensive front of efficient solutions in multi-objective portfolio optimization problems in presence of sparsity constraints. We start the discussion pointing out some weaknesses of the classical linear scalarization approach when applied to the considered class of problems. We are then motivated to propose a suitable algorithmic framework that is designed to overcome these limitations: the novel algorithm combines a gradient-based exploration-refinement strategy with a tailored initialization scheme based on memetic or multi-start descent procedures. Thorough computational experiments highlight how the proposed method is far superior to both linear scalarization and popular genetic algorithms.}

\keywords{Portfolio optimization; multi-objective problems; sparsity constraints; scalarization; Pareto front reconstruction.}


\pacs[MSC Classification]{90C29, 90C26, 90B50, 91G10}

\maketitle

\section{Introduction}

Portfolio selection has undoubtedly constituted one of the most attractive and studied problems in decision science, given its relevance to the finance community. Starting with the pioneering work of Markowitz \cite{Markowitz52}, the task of suitably choosing how to allocate resources among a set of assets in the market has been formalized as an optimization problem, with a growing complexity as details and real-world specifications got taken into consideration. 

\smallskip
From the one hand, the convex quadratic optimization problem with standard simplex constraints associated with the baseline Markowitz model has been further addressed taking into account more complex objectives, such as the Sharpe ratio \cite{Sharpe1998}, the ESG criterion \cite{PEDERSEN2021572}, or portfolio skewness \cite{DEATHAYDE20041335}; while some of these objectives are fairly simple to manage -- for instance, ESG is linear -- other objectives (skewness, Sharpe) even introduce nonconvexity elements within the problem, making it much harder to deal with. \rev{\label{rev2.5}Some of these objectives have also been studied in the context of Robust Optimization \cite{Scutella2013}, where parameter uncertainty -- common in this field -- is explicitly incorporated into the decision-making process.}

Then, we might note that the mean-variance Markowitz model \cite{markowitz1994general} is already an intrinsically bi-objective optimization model balancing two conflicting objectives: maximizing the expected return and minimizing the variance of the portfolio. The trade-off is commonly handled using a scalarized formulation, where a positive coefficient reflects the investor's risk aversion. Different objectives in most cases then come as additions to portfolio mean and variance, rather than as replacements. The growing importance of sustainable investments motivates the inclusion of ESG criteria in portfolio optimization, to quantify the environmental, social, and governance aspects of investments \cite{PEDERSEN2021572, STEUER2023742}. Skewness allows to distinguish between upside potential and downside risk; furthermore, the inclusion of a higher-order moment partly overcomes the unlikeliness of the utility functions depending only on the first two moments of portfolio return's distribution \cite{DEATHAYDE20041335, MANDAL2024121625}. While a weighted sum of the objectives is a practical way to deal with all requirements at once (see, e.g., \cite{Pascoletti1984, Eichfelder09}), many researchers have investigated how to explicitly address the multi-objective formulation \cite{fliege2016, deb02, custodio11, lapucci2024effectivefrontdescentalgorithmsconvergence, COCCHI2021100008, GARCIABERNABEU2024100305, radziukyniene2008evolutionary, Armananzas05, chen2019robust}. Indeed, retrieving a wide set of efficient solutions, i.e., optimal according to Pareto's theory, delegates the final choice of balance between the objectives to the final investor, who is provided with a clear picture of all possible trade-off scenarios to choose from.

\medskip
On the other hand, new constraints allowed to model the task with increased realism: the addition of general linear constraints, for example, allows to take into account no-short-selling constraints, maximum or minimum investment bounds, exposure limits to specific sectors or industries, turnover constraints, and constraints modeling benchmark considerations; for further information, we refer the reader to \cite{cornuejols2018optimization}. A much harder challenge is then posed by the introduction of a sparsity constraint \cite{Bertsimas22, Bertsimas2009, bienstock1996computational} to limit the number of active assets in the portfolio. This constraint is motivated mainly because managing and monitoring a large number of assets is costly, and sparse portfolios are often perceived by investors as reflecting active management strategies \cite{Bertsimas22}. Furthermore, there is a fixed cost associated with each transaction, which further incentivizes sparse portfolios. Sparsity constraints involve thresholds on the zero pseudo-norm, making the feasible set heavily nonconvex and introducing a strong combinatorial element within the problem \cite{bienstock1996computational}.

\rev{\label{rev2.1}Sparse optimization has been extensively studied in the literature, with growing interest in formulations that impose a hard constraint on the number of nonzero variables. This approach has been applied in a range of domains, especially including portfolio optimization \cite{Bertsimas22,gao13,Cesarone2013}, machine learning applications \cite{miller2002subset,bertsimas16,posada22,carreira2018learning,carlini2017towards,BOMZE2025, Borsos20} and signal processing \cite{BLUMENSATH2009265}, and is thoroughly reviewed in \cite{tillmann24}. A key practical advantage of such formulations is that sparsity can be explicitly controlled through an interpretable threshold, which can be directly specified by the end user. In contrast, sparsity-regularized approaches -- particularly those relying on the widely used $\ell_1$-norm as a convex surrogate for the $\ell_0$-norm \cite{tillmann24} -- are known to suffer from several limitations \cite{bertsimas16}: (i) in noisy settings or with correlated predictors, they often retain many irrelevant variables and shrink all coefficients toward zero, introducing bias; (ii) unlike the $\ell_0$ formulation, which selects variables without shrinkage, the $\ell_1$ approximation may exclude relevant predictors as the regularization strength increases; and (iii) when key assumptions about the data are violated, performance deteriorates in terms of both variable selection and predictive accuracy. These issues are particularly problematic in sparsity-constrained contexts, where the interpretability of the sparsity level is lost, making it unclear how to tune the $\ell_1$-norm to impose a specific constraint on the number of nonzero entries.}

A complete, modern formulation of the portfolio selection problem thus consists of a sparse multi-objective optimization (MOO) problem of the form:
\begin{equation}
	\label{eq::port-prob}
	\begin{aligned}
		\min_{x\in\mathbb{R}^n}\;&F(x) = [f_1(x),\ldots,f_m(x)]^\top
		\\\text{s.t. }& Ax\le b, \quad \mathbf{1}_n^\top x = 1 \\&\|x\|_0 \le s, \quad x \ge \mathbf{0}_n,
	\end{aligned}
\end{equation}
where $n$ is the number of assets, $f_1,\ldots,f_m$ are continuously differentiable objective functions, $A\in\mathbb{R}^{p\times n}$, $b\in\mathbb{R}^p$, $\|\cdot\|_0$ denotes the $\ell_0$ pseudo-norm, i.e., the number of non-zero components of a vector, $s \in \mathbb{N}$ such that $1 \le s < n$ and $\mathbf{1}_n$ and $\mathbf{0}_n$ indicate the $n$-dimensional vectors of all ones and all zeros, respectively. The above formulation has been the subject of some studies \cite{Lapucci2022apenalty, Bertsimas22}, but no thorough framework has been proposed on the methodological side to properly tackle it.

In this paper, we deal with this challenge. In the first place, we discuss the weakness of the simple \rev{(linear)} scalarization strategy. Specifically, we point out that the presence of the cardinality constraint breaks, even in the mean-variance case, the property of the problem only having supported solutions: some efficient trade-offs cannot be obtained as the optimal solution of the scalarized problem for any choice of the weights. In fact, entire portions of the Pareto front, corresponding to ``good'' choices of active assets, are somewhat shadowed in this setting. Moreover, the obtainment of a well spanned and uniformly spread front of solutions is further precluded by the complexity of choosing suitable wights.  Inspired by the work in \cite{Lapucci2024cardinality}, we then propose a detailed algorithmic framework specifically designed to effectively retrieve an accurate approximation of the Pareto front in instances of problem \eqref{eq::port-prob}. 

The proposed method exploits a heuristic evolutionary strategy \cite{deb02, Lapucci2023}, equipped with a tailored derivative-based ``hard-thresholding'' procedure \cite{Lapucci2022apenalty, Lapucci2024cardinality}, to feed with good starting points a second-stage multi-objective optimization method, based on the Front descent algorithm \cite{lapucci2024effectivefrontdescentalgorithmsconvergence}, for filling the Pareto front. All the cogs of the proposed methodology have been polished to be compatible with the presence of general linear constraints. By a thorough computational experimentation, we show that the proposed approach consistently solves the problem with high effectiveness, with superior performance w.r.t.\ state-of-the-art approaches.

The rest of the paper is organized as follows. In Section \ref{sec::preliminaries}, we review objective functions and constraints typical of portfolio problems; we also recall basic notions in multi-objective optimization, with some focus on \rev{linear} scalarization approaches. In Section \ref{sec::limits_scalarization}, we show through a simple example the limitations of the latter class approaches in finding specific efficient solutions. Then, in Section \ref{sec::SFSD-based-methods} we propose an algorithmic framework designed to effectively reconstruct the Pareto front of MOO sparse portfolio problems\rev{\label{rev2.4.a}; along with this, Appendix \ref{app::descent_methods} presents schematic and theoretical modifications of some methodologies that can be used within the framework but were not originally designed to handle constraints other than sparsity}. In Section \ref{sec::experiments}, we show the results of thorough computational experiments where we compared our framework with state-of-the-art approaches. Finally, in Section \ref{sec::conclusions} we provide some concluding remarks.

\section{Preliminaries}
\label{sec::preliminaries}

In this preliminary section, we provide a brief description of common objective functions and constraints in portfolio problems. Then, we continue reviewing some basic notions for (sparse) MOO problems.

\subsection{Objective Functions and Constraints in Portfolio Optimization}

In the core discussion of this paper, we are going to consider multi-objective formulations of the (sparse) portfolio selection problem, considering several combinations of objectives. Specifically, we will be interested in:
\begin{description}[font=\textbf]
	\item[Expected Return] The expected value of the return of investment is defined as
	\begin{equation}
		\label{eq::expected_return}
		f^\text{ER}(x) = c^\top x,
	\end{equation}
	with $c \in \mathbb{R}^n$ being the \textit{expected returns vector} \cite{markowitz1994general};  maximizing this quantity is arguably the easiest and most straightforward goal in portfolio optimization.
	\item[Portfolio Risk]  The well-known variance function, defined as 
	\begin{equation}
		\label{eq::variance}
		f^\text{V}(x) = \frac{1}{2}x^\top Qx,
	\end{equation} 
	where $Q \in \mathbb{R}^{n \times n}$ denotes the positive semi-definite \textit{variance-covariance matrix}, is classically used to measure the risk associated with a portfolio. Roughly speaking, the higher is the variance of the return of investment, the higher will be the chances of the realized return to be far lower than the expected value. 
	\item[ESG Score] The \textit{Environmental, Social, and Governance} (ESG) score \cite{PEDERSEN2021572, STEUER2023742} has become an essential criterion in portfolio optimization, reflecting an investor's preference for sustainable and responsible investments. 
	
	For each asset $i \in \{1,\ldots,n\}$ in the investment universe, let us denote as $v_i$ its ESG score, which quantifies its performance on environmental, social, and governance dimensions. Let us denote the vector of ESG scores as $v = [v_1,\ldots,v_n]^\top \in \mathbb{R}^n$. The portfolio's ESG score is then calculated as the weighted sum of the ESG scores of its constituent assets:
	\begin{equation*}
		\label{eq::esg}
		f^\text{ESG}(x) = x^\top v.
	\end{equation*}
	
	Maximizing the ESG score, portfolio managers aim to balance traditional financial objectives (e.g., risk and return) with non-financial considerations, such as sustainability and ethical alignment so as to ensure that the portfolio aligns with the investor's sustainability goals.
	\item[Sharpe Ratio] The Sharpe ratio \cite{Sharpe1998} is a widely used metric that incorporates the expected return and variance functions to evaluate the risk-adjusted performance of a portfolio. Formally, it is defined as
	\begin{equation*}
		f^\text{SR}(x) = \frac{f^\text{ER}(x)}{\sqrt{f^\text{V}(x)}},
	\end{equation*}
	i.e., it is the ratio between the expected value and the standard deviation of the return of investment.
	
	This quantity aims to measure how much return a portfolio generates per unit of risk, with higher values indicating better risk-adjusted performance. 
	In brief, the Sharpe ratio helps investors compare portfolios with different risk profiles, penalizing the ones with high volatility unless accompanied by proportionally higher returns.
	\item[Portfolio Skewness] Skewness represents a measure for the asymmetry of the return distribution \cite{DEATHAYDE20041335, MANDAL2024121625}: a positive value indicates a higher probability of large positive returns, while negative skewness reflects a higher likelihood of extreme losses. Investors often prefer portfolios with positive skewness, as they provide enhanced upside potential while mitigating downside risks. Thus, by considering skewness as objective function, portfolio managers can address tail risks and align portfolios with investor preferences for asymmetrical return profiles.
	
	Mathematically, following the idea in \cite{DEATHAYDE20041335, PedroBrito2016}, the skewness of a portfolio can be formalized as the third moment of portfolio return, i.e.,
	\begin{equation}
		\label{eq::skew}
		f^\text{SW}(x) = \mathbb{E}[(\text{R}_p - f^\text{ER}(x))^3],
	\end{equation}
	where \( R_p \) is the random variable of portfolio return and \( f^\text{ER}(x) \) is the expected return of the portfolio.
	
	In order to calculate equation \eqref{eq::skew}, we need to define the coskewness tensor $\mathbf{C}$, which captures third-order interactions among asset returns. In particular, $\mathbf{C}$ is a tensor of size $n \times n \times n$ whose $\mathbf{C}_{ijk}$ is given by
	\begin{equation*}
		\mathbf{C}_{ijk} = \mathbb{E}[(r_i - c_i)(r_j - c_j)(r_k - c_k)],
	\end{equation*}
	where $r_i$ denotes the return of asset $i$ and $c_i$ is its mean. The expectation is computed across the time series. The value of $\mathbf{C}_{ijk}$ quantifies the asymmetric interaction between the deviations of assets $i$, $j$, and $k$ from their respective means. The portfolio skewness is then computed by combining the coskewness tensor \( \mathbf{C} \) with the vector of portfolio weights $x$:
	\begin{equation*}
		f^\text{SW}(x) = x^\top \mathbf{C} (x \otimes x),
	\end{equation*}
	where $\otimes$ indicates the Kronecker product.
\end{description}

\medskip 
As for the constraints, the general portfolio model we address in this work possibly encompasses the following specifications \cite{cornuejols2018optimization}:
\begin{description}[font=\textbf]
	\item[Standard Simplex Constraints] 
	The standard simplex constraint is a basic requirement in portfolio optimization. Mathematically, this constraint is defined as:
	\begin{equation*}
		\label{eq::simplex}
		x \geq \mathbf{0}_n, \qquad \mathbf{1}_n^\top x = 1.
	\end{equation*}
	
	The first condition prevents short-selling, meaning that the budget  $x_i$ allocated to each asset \( i \in \{1, \ldots, n\} \) shall be non-negative. 
	
	The second condition ensures  that an investor allocates all the available capital to the given set of assets, without leaving any budget portion uninvested.

	\item[Minimum and Maximum Investment Constraints]
	
	Minimum investment constraints ensure that a certain minimum portion of the total capital is allocated to specific assets, while maximum investment constraints prevent concentration in any single asset. These constraints can be expressed as:
	\[
	\ell_i \leq x_i \leq u_i, \quad \forall i \in \{1, \ldots, n\},
	\]
	where $\ell_i \in\mathbb{R}$ and $u_i \in\mathbb{R}$ such that $l_i\le u_i$ are the minimum and the maximum investment bounds for asset \( i \).
	In a compact representation, this set of constraints becomes
	\[
	\ell \leq x \leq u,
	\]
	where $ \ell, u \in \mathbb{R}^n$ are the vectors of minimum and maximum investment bounds respectively.
	
	\item[Portfolio Beta Constraints] The beta coefficient measures the sensitivity of an asset return to movements in a market index. Formally, for an individual asset $i\in \{1,\ldots, n\}$, the beta is defined as
	\[
	\beta_i = \frac{\text{Cov}(r_i, r_m)}{\text{Var}(r_m)},
	\]
	where \( r_i \) and \( r_m \) are random variables, respectively the return of asset \( i \) and the return of the market index, \(\text{Cov}(r_i, r_m)\) is the covariance between \( r_i \) and \( r_m \), \(\text{Var}(r_m)\) is the variance of the market index return.
	Let us define the vector $\beta=[\beta_1,\ldots, \beta_n]^\top \in \mathbb{R}^n$.
	
	In practical applications, investors may seek to limit the portfolio beta within a predefined range to manage exposure to market risk; thus, we may want to impose lower and upper bound constraints on the weighted sum of the betas:
	\begin{gather*}
		x^\top\beta \ge \beta_\text{min}\\
		x^\top\beta \le \beta_\text{max},
	\end{gather*}
	with $\beta_\text{min}, \beta_\text{max} \in \mathbb{R}_+$,  $\beta_\text{min} \le \beta_\text{max}$.
	In order to embed these linear constraints in formulation \eqref{eq::port-prob}, we just need to set 
	\begin{equation*}
		A = \begin{bmatrix} -\beta \\ \beta \end{bmatrix} \qquad b = \begin{bmatrix} -\beta_\text{min} \\ \beta_\text{max} \end{bmatrix}.
	\end{equation*}

	\item[Sector or Industry Exposure Constraints]
	
	Another common requirement in portfolio management is to limit the exposure to specific sectors or industries. Given a set $\mathcal{I}$ representing the set of assets belonging to a given industry or sector, the following linear constraint can be imposed
	\[
	\eta_\mathcal{I} \leq \sum_{j\in \mathcal{I}} x_j \leq \mu_\mathcal{I}
	\]
	where $\eta_\mathcal{I},\mu_\mathcal{I} \in\mathbb{R}$, with $\eta_\mathcal{I}\le \mu_\mathcal{I}$, are the minimum and maximum exposure respectively to the considered industry or sector. 
	
	\item[Turnover Constraint]
	Given an initial allocation vector $x^0 \in \mathbb{R}^n$, the turnover constraint limits the changes in portfolio composition in the following period, reducing transaction costs. This kind of constraint is defined as
	\[
	\sum_{j=1}^n |x_j^0 - x_j| \leq \tau,
	\]
	where \( \tau \) is the maximum total turnover allowed.
	The above nonlinear formulation can of course  be rewritten equivalently as a set of linear constraints.  In particular, introducing a new vector of auxiliary variables $y\in\mathbb{R}^n$, we can impose
	\begin{equation*}
		-y_i\le x_i - x_i^0 \leq y_i \quad \forall\, i=1,\ldots,n,\qquad\text{ and }\qquad
		\sum_{i=1}^n y_i \leq \tau.
	\end{equation*}
	
	\item[Cardinality Constraint] The cardinality constraint is a requirement introduced to limit the number of assets actively included in the portfolio. This constraint is motivated by practical considerations, such as the cost and complexity of managing a large number of assets, as well as the investor's preference for a more concentrated and actively managed portfolio. Sparse portfolios are also more efficient due to reduced transaction costs.
	Formally, the constraint is imposed setting an upper bound $s<n$ to the \( \ell_0 \)-pseudo-norm of the portfolio allocation vector \( x \), i.e.,
	\[
	\|x\|_0 \leq s.
	\]

\end{description}

\subsection{Fundamentals of Multi-objective Optimization}
\label{subsec::moo_basics}

Let us consider a multi-objective optimization problem of the form
\begin{equation}
	\label{eq:gen_mo_prob}
	\begin{aligned}
		\min_{x\in\mathbb{R}^n}\;&F(x) = \left[f_1(x),\ldots,f_m(x)\right]^\top\\
		\text{s.t. }& x\in \Omega,
	\end{aligned}
\end{equation}
where $\Omega$ is some nonempty closed feasible set. 
To compare solutions of problem \eqref{eq:gen_mo_prob} to each other, we make use of the standard partial ordering on $\mathbb{R}^m$: given $u, v \in \mathbb{R}^m$, $u \le v$ means $u_i \le v_i$ for all $i \in \{1,\ldots, m\}$; similarly, $u < v$ if $u_i < v_i$ for all $i$; finally, we denote by $u \lneqq v$ when $u \le v$ and $u \ne v$. 
The latter relation allows to introduce the notion of dominance between solutions of problem \eqref{eq:gen_mo_prob}: given $x, y \in \Omega$, we say that $y$ is \textit{dominated} by $x$ if $F(x) \lneqq F(y)$. We are now able to introduce Pareto optimality concepts for problem \eqref{eq:gen_mo_prob}.
\begin{definition}
	A solution $\bar{x} \in \Omega$ is:
	\begin{enumerate}
		\item \textit{Pareto optimal} (or \textit{efficient}) for problem \eqref{eq:gen_mo_prob} if there is no $y \in \Omega$ such that $F(y) \lneqq F(\bar{x})$;
		\item \textit{weakly Pareto optimal} (or \textit{weakly efficient}) for problem \eqref{eq:gen_mo_prob} if there is no $y \in \Omega$ such that $F(y) < F(\bar{x})$.
	\end{enumerate}
	If one of the above properties holds in $\Omega \cap \mathcal{N}(\bar{x})$ for some neighborhood $\mathcal{N}(\bar{x})$ of $\bar{x}$, then $\bar{x}$ is locally (weakly) Pareto optimal.
\end{definition}

Clearly, Pareto optimality is a stronger property and implies weak Pareto optimality. The set of Pareto optimal solutions is called \textit{Pareto set} (or \textit{efficient set}), while the image of the latter through $F(\cdot)$ is named \textit{Pareto front} (or \textit{efficient frontier}).
Under differentiability assumptions on $F$, first-order necessary conditions for Pareto optimality can also be given.
\begin{lemma}[{\cite[\rev{Proposition 3.3}]{Lapucci2022apenalty}}]
	\label{lem::par-stat}
	Let point $\bar{x} \in \Omega$ be a (weakly) efficient solution of \eqref{eq:gen_mo_prob}; then $\bar{x}$ is Pareto stationary for problem \eqref{eq:gen_mo_prob}, i.e., 
	\begin{equation}
		\label{eq:stat_prob}
		\min_{d \in \mathcal{D}_\Omega(\bar{x})}\max_{j \in \{1,\ldots, m\}} \nabla f_j(\bar{x})^\top d + \frac{1}{2}\|d\|^2 = 0,
	\end{equation}
	where $\mathcal{D}_\Omega(\bar{x}) = \{v \in \mathbb{R}^n \mid \exists\, \bar{t} > 0: \bar{x} + tv \in \Omega \; \forall t \in [0, \bar{t}\,]\}$ is the set of feasible directions at $\bar{x}$.
\end{lemma}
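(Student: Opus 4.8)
The plan is to argue by contradiction, exploiting the fact that the objective of the inner problem in \eqref{eq:stat_prob} already vanishes at $d=0$. First I would observe that, since $\bar{x}\in\Omega$, the null direction belongs to $\mathcal{D}_\Omega(\bar{x})$ (one may take any $\bar{t}>0$, because $\bar{x}+t\cdot 0=\bar{x}\in\Omega$), and evaluating the inner objective $\max_{j}\nabla f_j(\bar{x})^\top d + \tfrac{1}{2}\|d\|^2$ at $d=0$ returns the value $0$. Hence the minimum in \eqref{eq:stat_prob} is always $\le 0$, and to establish Pareto stationarity it suffices to rule out the existence of a feasible direction at which the inner objective is strictly negative.

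Suppose then, for contradiction, that there is $d\in\mathcal{D}_\Omega(\bar{x})$ with $\max_{j}\nabla f_j(\bar{x})^\top d + \tfrac{1}{2}\|d\|^2 < 0$. Since the quadratic term $\tfrac{1}{2}\|d\|^2$ is nonnegative, this forces $\max_{j}\nabla f_j(\bar{x})^\top d < 0$, i.e., $\nabla f_j(\bar{x})^\top d < 0$ for every $j\in\{1,\ldots,m\}$; in particular $d\neq 0$, so $d$ is a common strict descent direction for all the objectives. The next step is to convert this infinitesimal decrease into an actual feasible point dominating $\bar{x}$: by definition of $\mathcal{D}_\Omega(\bar{x})$ there is $\bar{t}>0$ with $\bar{x}+td\in\Omega$ for all $t\in[0,\bar{t}\,]$, and by continuous differentiability of each $f_j$ a first-order Taylor expansion gives $f_j(\bar{x}+td)=f_j(\bar{x})+t\,\nabla f_j(\bar{x})^\top d + o(t)$.

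I expect the one point requiring care to be the passage from the individual directional derivatives to a \emph{simultaneous} decrease of all objectives along a single step. Because the index set $\{1,\ldots,m\}$ is finite and each directional derivative is strictly negative, one can set $\delta=\min_{j}\bigl(-\nabla f_j(\bar{x})^\top d\bigr)>0$ and absorb the remainder terms uniformly, so that for all sufficiently small $t\in(0,\bar{t}\,]$ one has $f_j(\bar{x}+td)<f_j(\bar{x})$ for every $j$ at once. Setting $y=\bar{x}+td\in\Omega$ then yields $F(y)<F(\bar{x})$, contradicting the (local) weak Pareto optimality of $\bar{x}$; locality causes no issue, since $y$ can be taken arbitrarily close to $\bar{x}$. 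This contradiction shows that no such $d$ exists, whence the minimum in \eqref{eq:stat_prob} equals $0$ and $\bar{x}$ is Pareto stationary. As Pareto optimality implies weak Pareto optimality, the same conclusion covers the efficient case a fortiori.
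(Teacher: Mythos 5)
Your proof is correct. Note that the paper does not actually prove this lemma---it is imported verbatim, with citation, from Proposition 3.3 of \cite{Lapucci2022apenalty}---and your argument (the null direction shows the minimum is $\le 0$; a strictly negative value would give a common strict descent direction $d$, and a first-order expansion along the feasible segment $\bar{x}+td$, with the finitely many remainder terms absorbed uniformly, would produce a feasible point with $F(y)<F(\bar{x})$, contradicting weak efficiency) is exactly the standard argument behind that cited result, including the correct reduction of the efficient case to the weakly efficient one.
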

\noindent We will denote by $\theta(\bar{x})$ and $v(\bar{x})$ the minimum and optimal solution of problem \eqref{eq:stat_prob}, respectively. Note that, when $\Omega$ is non-convex, $v(\bar{x})$ may not be unique.

\medskip
The easiest and arguably most common practice to tackle problem \eqref{eq:gen_mo_prob} consists in resorting to the weighted sum scalarization method \cite{Eichfelder09, ehrgott2005multicriteria}. The focus then shifts to solving the single-objective problem  
\begin{equation}
	\label{eq::port-prob-scal}
	P(\lambda) = \min_{x\in\Omega}\;\sum_{j=1}^{m} \lambda_j f_j(x),
\end{equation}
where the weights $\lambda = [\lambda_1, \dots, \lambda_m]^\top \in \mathbb{R}^m$ are such that $\lambda_j \ge 0$ for all $j \in \{1,\ldots,m\}$ and $\sum_{j=1}^m\lambda_j = 1$. If we consider, for instance, the variance $f^\text{V}$ and the expected return $f^\text{ER}$ as objectives of the portfolio problem, we immediately retrieve the original Markovitz formulation \cite{markowitz1994general, Markowitz52}.

The relationship between optimal solutions of a scalarized problem and efficient solutions of the multi-objective problem can be characterized by the following concept.
\begin{definition}[{\cite[Definition 8.7]{ehrgott2005multicriteria}}]
	Let $\bar{x}$ be an efficient solution of problem \eqref{eq:gen_mo_prob}. If there exists some $\lambda \in \mathbb{R}^m$ such that $\bar{x}$ is optimal for problem $P(\lambda)$ defined in \eqref{eq::port-prob-scal}, then $\bar{x}$ is called a \textit{supported efficient solution}, while $y = F(x)$ is called a \textit{supported non-dominated point}. 
\end{definition}
It is easy to prove that, for any choice of $\lambda\ge \mathbf{0}_m$, every globally optimal solution of $P(\lambda)$ constitutes a Pareto optimal point for problem \eqref{eq:gen_mo_prob} and, thus, a \textit{supported} efficient solution. The converse is in general not necessarily true. In these particular cases, \rev{\label{rev2.2.a}it is possible to rely on non-linear scalarizing functions, such as the weighted Tchebycheff norm \cite{bowman1976relationship,steuer1983interactive}. However, although theoretically powerful -- being capable of reaching any Pareto-optimal trade-off with a suitable choice of weights -- these approaches often lead to scalarized subproblems that are challenging to solve to global optimality. When such subproblems are tractable, }scalarization might be seen as a sufficient strategy to deal with the multi-objective problem\rev{; nevertheless,} choosing the weights for obtaining a well spread Pareto front \rev{still} remains a nontrivial task.

\subsection{\rev{Additional Notation for Cardinality Constraints}}
\label{subsec::additional_notation_cc}

In the following sections, we will consider the case where the cardinality constraint $\|x\|_0\le s$ is included in the definition of the feasible set $\Omega$. To deal with this setting, we need to introduce some additional notation. Given an index set $S \subseteq \{1,\ldots, n\}$, its cardinality is indicated with $|S|$, while we denote by $\bar{S} = \{1,\ldots, n\} \setminus S$ its complementary set. Given $x \in \Omega$, we indicate by $x_S$ the sub-vector of $x$ induced by $S$, i.e., the vector composed by the components $x_i$, with $i \in S$. Then, we define the \textit{support set} of $x$, i.e., the set of the indices corresponding to the nonzero components of $x$, as $S_1(x) = \{i \in \{1,\ldots, n\} \mid x_i \ne 0\}$; $S_0(x) = \{1,\ldots, n\} \setminus S_1(x)$ is the complementary set of $S_1(x)$. Finally, according to \cite{beck2016}, we say that an index set $J$ is a super support set for $x$ if $S_1(x) \subseteq J$ and $|J| = s$; the set of all super support sets at $x$ is denoted by $\mathcal{J}(x)$ and $|\mathcal{J}(x)| = 1$ if and only if $\|x\|_0 = s$.

\section{Limitations of Scalarization in Cardinality-constrained Problems}
\label{sec::limits_scalarization}
Although scalarization methods have arguably been the most employed among the approaches for portfolio problems (see, e.g., \cite{markowitz1994general, Markowitz52, markowitz2000mean, hm1959portfolio, cesarone2020optimization}), they become provably faulty as the problems get less regular and, in particular, when there are non-supported efficient solutions; this is the case of cardinality-constrained portfolio problems \eqref{eq::port-prob-scal}. Indeed, even with convex objective functions, such as variance $f^\text{V}$ \eqref{eq::variance} and expected return $f^\text{ER}$ \eqref{eq::expected_return}, the presence of the cardinality constraint possibly makes problem \eqref{eq::port-prob} not supported anymore. 

In order to illustrate this downside of the \rev{linear} scalarization approach, let us consider a simple three-dimensional example with variance and expected return as objective functions:
\begin{equation}
	\label{eq::port-prob-toy}
	\begin{aligned}
		\min_{x\in\mathbb{R}^3}\;&F(x) = [2x_1^2 + 0.5x_2^2 + 3x_3^2, 4x_1 + 5x_2 + x_3]^\top
		\\\text{s.t. }& \mathbf{1}_3^\top x = 1, \quad \|x\|_0 \le 1, \quad x \ge \mathbf{0}_3.
	\end{aligned}
\end{equation}
The above problem can be seen as a mean-variance portfolio problem with cardinality constraint in multi-objective form, setting $$Q = \begin{bmatrix} 2 & 0 & 0 \\ 0 & 0.5 & 0 \\ 0 & 0 & 3 \end{bmatrix},\qquad c = [4, 5, 1]^\top.$$  The corresponding weighted-sum scalarization problem is 
\begin{equation}
	\label{eq::port-prob-toy-scal}
	\begin{aligned}
		\min_{x\in\mathbb{R}^3}\;&2x_1^2 + 0.5x_2^2 + 3x_3^2 + \tilde{\lambda}(4x_1 + 5x_2 + x_3)
		\\\text{s.t. }& \mathbf{1}_3^\top x = 1, \quad \|x\|_0 \le 1, \quad x \ge \mathbf{0}_3,
	\end{aligned}
\end{equation} 
where for simplicity and with no loss of generality we consider a single weight $\tilde{\lambda}$ associated with the second objective function.

It is trivial to see that the combination of the cardinality ($s=1$) and simplex constraints induces a feasible set containing only three solutions: 
$x_1^\star = [1, 0, 0]^\top$, $x_2^\star = [0, 1, 0]^\top$ and $x_3^\star = [0, 0, 1]^\top$; the images of these solutions through $F$ are shown in Figure \ref{fig::pareto}. From problem \eqref{eq::port-prob-toy-scal} we can then define the following three $\tilde{\lambda}$-dependent functions, each one related to one of the three solutions:
$$f_{x_1}(\tilde{\lambda}) = 2 + 4\tilde{\lambda}, \qquad f_{x_2}(\tilde{\lambda}) = 0.5 + 5\tilde{\lambda}, \qquad f_{x_3}(\tilde{\lambda}) = 3 + \tilde{\lambda};$$ the latter ones are plotted for $\tilde{\lambda} \ge 0$ in Figure \ref{fig::functions}.

\begin{figure}
	\subfloat[Pareto optimal solutions of problem \eqref{eq::port-prob-toy}. Solution $x_1^*$ is not a global minimizer for the scalarized problem for any value of $\tilde{\lambda}$; thus, $F(x_1^\star)$ cannot be obtained using the weighted-sum scalarization method.\label{fig::pareto}]{\includegraphics[width=0.39\textwidth]{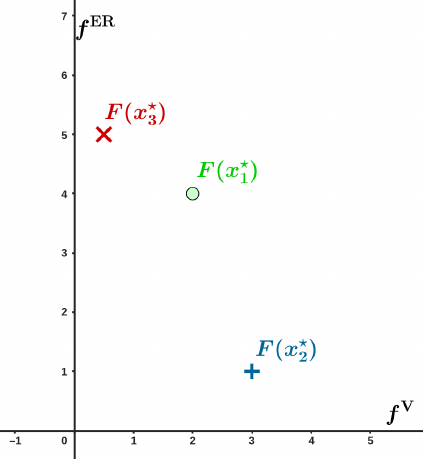}}
	\hfil
	\subfloat[Plot of functions $f_{x_1}(\tilde{\lambda})$, $f_{x_2}(\tilde{\lambda})$ and $f_{x_3}(\tilde{\lambda})$; note that $\nexists\tilde{\lambda} \ge 0$ s.t.\ $f_{x_1}(\tilde{\lambda}) \le \min\{f_{x_2}(\tilde{\lambda}), f_{x_3}(\tilde{\lambda})\}$.\label{fig::functions}]{\includegraphics[width=0.39\textwidth]{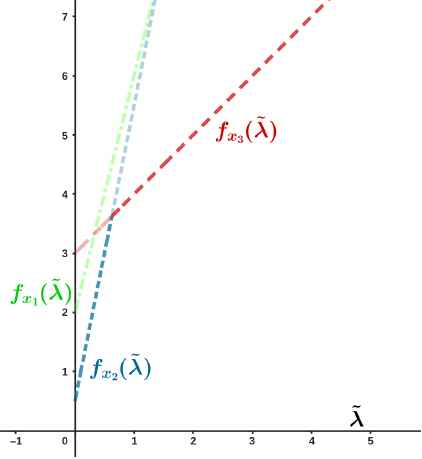}}
	\caption{Example showing the limitations of weighted-sum scalarization approach.}
\end{figure}

Functions $f_{x_1}(\tilde{\lambda})$, $f_{x_2}(\tilde{\lambda})$, $f_{x_3}(\tilde{\lambda})$ give us an immediate intuition of the solutions found given a value for $\tilde{\lambda}$. For $0 \le \tilde{\lambda} < \frac{5}{8}$, where $\tilde{\lambda} = \frac{5}{8}$ is the intersection point of lines $f_{x_2}(\tilde{\lambda})$ and $f_{x_3}(\tilde{\lambda})$, the minimum value is attained by $f_{x_2}(\tilde{\lambda})$, i.e., $f_{x_2}(\tilde{\lambda}) < \min\{f_{x_1}(\tilde{\lambda}), f_{x_3}(\tilde{\lambda})\}$; thus, the best solution is represented by $x_2^\star$. For $\tilde{\lambda}=\frac{5}{8}$, we have $f_{x_2}(\tilde{\lambda}) =f_{x_3}(\tilde{\lambda})<f_{x_1}(\tilde{\lambda})$: $x_2^\star$ and $x_3^\star$ are the two global minimizers. For $\tilde{\lambda} \ge \frac{5}{8}$, we get that $x_3^\star$ is the optimal solution of the scalarized problem. The solution $x_1^\star$ is thus never obtained through \rev{linear} scalarization. However, we can immediately observe from Figure \ref{fig::pareto} that $F(x_1^\star)$ is not dominated and thus $x_1^\star$ actually is an efficient solution.

The possible failure at obtaining some efficient solutions gets exacerbated with much more complex problems. A particularly troublesome situation occurs when \rev{linear} scalarization is unable to detect any of the efficient solutions associated with a particular support set. \rev{\label{rev2.2.b}As already mentioned in Section \ref{subsec::moo_basics}, one possible approach is to employ non-linear scalarizations (e.g., weighted Tchebycheff norms \cite{bowman1976relationship,steuer1983interactive}), which are theoretically capable of reaching any Pareto-optimal solution given suitable choices of weights. Still, there are no predefined rules for selecting these weights, and the resulting scalarized problems are often difficult to solve to global optimality, even with modern solvers. All these issues} thus represent a strong motivation to devise alternative strategies to scalarization when dealing with cardinality-constrained portfolio problems. In the rest of the paper, we will describe a new methodology aimed at fully reconstructing the frontier of efficient solutions in portfolio optimization problems.

\section{A Tailored Sparse Front Descent Method for Portfolio Problems}
\label{sec::SFSD-based-methods}

\rev{In this section, we present a tailored algorithmic framework for cardinality-constrained portfolio problems. The real motif of our proposal lies in the observation that, unlike the smoother Pareto fronts typical of standard nonlinear continuous problems, the Pareto frontier in cardinality-constrained multi-objective optimization tends to be more irregular and composed of multiple smooth segments, each of which corresponds to Pareto-optimal solutions sharing the same support set. Figure~\ref{fig::prototypes} shows typical examples of such Pareto frontiers.}

\begin{figure}
    \centering
    \subfloat{\includegraphics[width=0.49\linewidth]{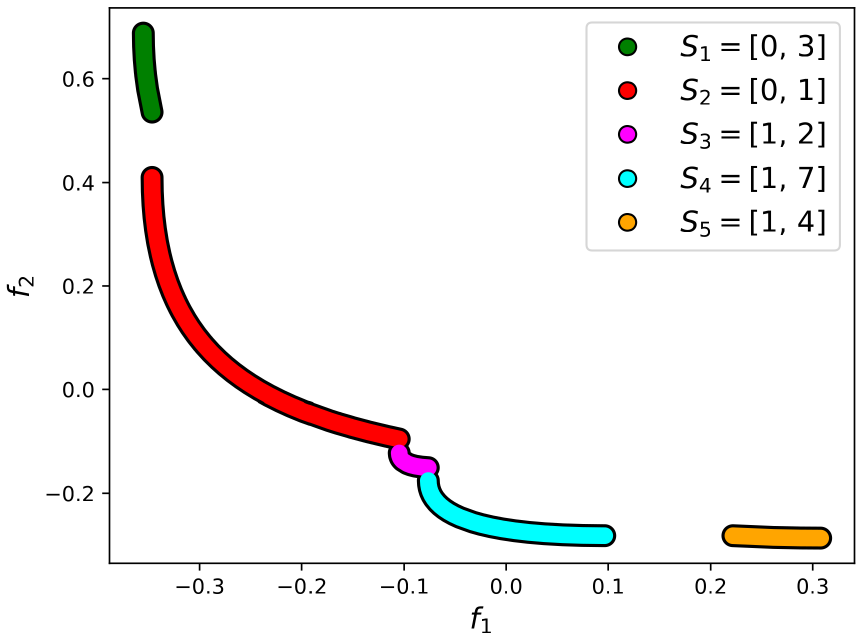}}
    \hfil
    \subfloat{\includegraphics[width=0.49\linewidth]{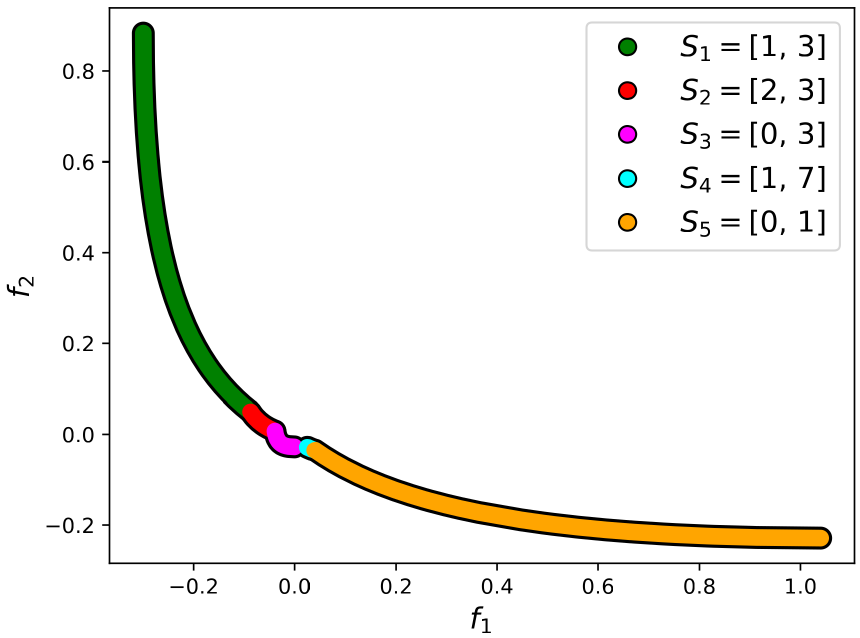}}
    \caption{\rev{Illustration of typical Pareto frontiers for cardinality-constrained multi-objective optimization problems.}}
    \label{fig::prototypes}
\end{figure}

\rev{Our framework then addresses the two main challenges posed by the (more) complex shape of the Pareto frontier:
\begin{enumerate}[i)]
    \item identifying efficient solutions associated with a diverse set of support sets -- ideally those representing distinct segments of the actual Pareto front;
    \item effectively exploring each segment of the Pareto front corresponding to a given support set identified in the first phase.
\end{enumerate}}

\rev{In the next two subsections, we address these challenges separately, beginning with the second.}

\subsection{Sparse Front Steepest Descent}
\label{subsec::SFSD}

In the literature, the methodologies considered to solve problems of the form \eqref{eq::port-prob} consist in scalarization \cite{markowitz1994general, Eichfelder09,Pascoletti1984,fliege2004gap,drummond2008choice,bowman1976relationship,steuer1983interactive} and evolutionary \cite{Chen2017,deb02,laumanns2002combining,mostaghim2007multi,konak2006multi} methods. However, both the two classes of approaches have drawbacks regarding the spanning of the Pareto front; in particular, Pareto front approximations are obtained whose spread highly depends on the (fixed) number of solutions; moreover, the selection of suitable weights to obtain a uniform front is not trivial in scalarization approaches, whereas evolutionary methods typically scale poorly as the dimensions of the problem grow \cite{COCCHI2021100008,Cocchi2020,custodio11}.  In this work, we thus propose a framework revolving around the employment of the \textit{Sparse Front Steepest Descent} (\texttt{SFSD}) algorithm  \cite{Lapucci2024cardinality}; for the case of MOO problems without additional constraints other than sparsity, this method  was shown to be highly effective at retrieving accurate and well-spread Pareto front approximations, provided a suitable strategy is available to find starting solutions with various and good support sets.

\begin{algorithm}
	\caption{\textit{Sparse Front Steepest Descent with Linear Constraints} (\texttt{SFSD})}\label{alg::SFSD}
	\begin{algorithmic}[1]
		\Require $F:\mathbb{R}^n \rightarrow \mathbb{R}^m$, $\delta \in (0, 1)$, $\gamma \in (0, 1)$.
		\State $\mathcal{X}^0$ = \texttt{Initialize}($F$) \label{line::initialization}
		\State $k = 0$
		\While{a stopping criterion is not satisfied}
		\State $\widehat{\mathcal{X}}^k = \mathcal{X}^k$
		\ForAll{$(x_c, J_{x_c})\in \mathcal{X}^k$}
		\If{$(x_c, J_{x_c}) \in \widehat{\mathcal{X}}^k$ }
		\If{$\theta_{J_{x_c}}(x_c)<0$ \label{line::theta_1}}
		\vskip0.1cm
		\State $\alpha_c^k = \max\limits_{h\in\mathbb{N}} \{\delta^h\mid F(x_c+\delta^h v_{J_{x_c}}(x_c))\le F(x_c)+\boldsymbol{1}_m\gamma\delta^h\theta_{J_{x_c}}(x_c)\}$ \label{line::Armijo_1}
		\Else
		\State $\alpha_c^k=0$
		\EndIf 
		\State $z_c^k = x_c+\alpha_c^kv_{J_{x_c}}(x_c)$ \label{line::new_zck}
		\State $\widehat{\mathcal{X}}^k = \widehat{\mathcal{X}}^k \setminus \left\{(y, J_y) \in \widehat{\mathcal{X}}^k \mid J_y = J_{x_c},\; F(z^k_c) \lneqq F(y)\right\} \cup \left\{(z^k_c, J_{x_c})\right\}$ \label{line::filtering-1}
		\ForAll{$\mathcal{I}\subseteq\{1,\ldots,m\}$ s.t.\ $\theta^\mathcal{I}_{J_{x_c}}(z_c^k) < 0$ \label{line::theta_2}}
		\If{$(z_c^k, J_{x_c})\in\widehat{\mathcal{X}}^k$ \label{line::nondom}}
		\State 
		\vskip-0.7cm
		\begin{equation*}
			\quad\quad
			\begin{aligned}
				\alpha_c^\mathcal{I} & = \max_{h \in \mathbb{N}}\left\{\delta^h
				\middle\vert
				\begin{aligned}
					& \forall(y, J_y)\in\widehat{\mathcal{X}}^k, \; J_y=J_{x_c},\; \exists j \text{ s.t. }\\
					&\quad f_j(z_c^k+\delta^h v^\mathcal{I}_{J_{x_c}}(z_c^k)) < f_j(y)
				\end{aligned}
				\right\}
			\end{aligned}
		\end{equation*} \label{line::Armijo_2}
		\State $\hat{z} = z_c^k + \alpha_c^\mathcal{I} v^\mathcal{I}_{J_{x_c}}(z_c^k)$ \label{line::hatz}
		\State $\widehat{\mathcal{X}}^k = \widehat{\mathcal{X}}^k \setminus \left\{(y, J_{y}) \in \widehat{\mathcal{X}}^k \mid J_y=J_{x_c},\; F(\hat{z}) \lneqq F(y)\right\} \cup \left\{(\hat{z}, J_{x_c})\right\}$ \label{line::filtering-2}
		\EndIf
		\EndFor
		\EndIf	
		\EndFor
		\State $\mathcal{X}^{k + 1} = \widehat{\mathcal{X}}^k$
		\State $k = k + 1$
		\EndWhile\\
		\Return $\mathcal{X}^k$
	\end{algorithmic}
\end{algorithm}

The pseudocode of an \texttt{SFSD} procedure, adapted to handle the additional linear constraints commonly appearing in portfolio problems, is reported in Algorithm \ref{alg::SFSD}. The method starts with an initial phase (line \ref{line::initialization}) - crucial for the computational performance - where feasible solutions associated with promising supports, \rev{i.e., support sets possibly associated with efficient solutions}, are provided. \rev{\label{rev2_Jx}Each solution is then paired with its support set, so that the search steps performed by \texttt{SFSD} from that solution contribute to spanning the portion of the Pareto frontier composed of points sharing the same support. If the support set is not full, the solution is instead paired with one of its super support sets to explore a broader region of the feasible space and, when necessary, to eventually reach a full support set. The resulting set for \texttt{SFSD} after the initial phase is thus}

\begin{equation}
    \label{eq::X0}
    \mathcal{X}^0 = \{(x, J_x) \mid x \in \Omega,\; J_x \in \mathcal{J}(x)\}.
\end{equation}

\rev{Note that, by the definition of support set and super support set given in Section \ref{subsec::additional_notation_cc}, if \(|S_1(x)| = s\), then \(\mathcal{J}(x) = \{S_1(x)\}\); that is, solution \(x\) is effectively associated with its support set in the set \(\mathcal{X}^0\).}

\rev{At} a generic iteration $k$, each point $x_c$ in the current list of solutions is processed as follows.
\begin{itemize}
	\item All the objectives are first decreased by an optimization step carried out in the active subspace induced by the support $J_{x_c}$ (lines \ref{line::theta_1}-\ref{line::new_zck}). In particular, a feasible direction of descent for all the objectives is computed (line \ref{line::theta_1}) solving an instance of problem
	\begin{equation}
		\label{eq::common-dir}
		\begin{aligned}
			\theta_{J_{x_c}}(x_c) = \min_{d \in \mathbb{R}^n}\;&\max_{j \in \{1,\ldots, m\}} \nabla f_j(x_c)^\top d + \frac{1}{2}\|d\|^2
			\\\text{s.t. }& A(x_c+d) \le b, \quad \mathbf{1}_n^\top(x_c+d) = 1 \\& d_{\bar{J}_{x_c}} = \mathbf{0}_{|\bar{J}_{x_c}|}, \quad x_c+d \ge \mathbf{0}_n.
		\end{aligned}
	\end{equation}
	We denote by $v_{J_{x_c}}(x_c)$ the problem solution, which we call \textit{constrained common descent direction}. Note that, given the strong convexity of the objective function and the convexity of the feasible set of \eqref{eq::common-dir}, $v_{J_{x_c}}(x_c)$ is unique. An Armijo-type line search is employed (line \ref{line::Armijo_1}) to compute a corresponding stepsize that provides a sufficient decrease of $F$, leading to a new solution $z_c^k$.
	\item As long as $z_c^k$ is not dominated by any other point $y$ in the current list of solutions such that $J_y = J_{x_c}$ (line \ref{line::nondom}), additional line searches in the subspace are carried out starting at $z_c^k$ itself, this time only considering subsets of objectives $\mathcal{I} \subseteq \{1,\ldots,m\}$ (lines \ref{line::theta_2}-\ref{line::hatz}).  The resulting points are sought to be simply non-dominated by any other point in the current set (line \ref{line::Armijo_2}). These ``exploration'' steps are carried out along \textit{constrained partial descent directions} (line \ref{line::theta_2}), found solving the problem
	\begin{equation}
		\label{eq::partial-dir}
		\begin{aligned}
			\theta_{J_{x_c}}^\mathcal{I}(z_c^k) = \min_{d \in \mathbb{R}^n}\;&\max_{j \in \mathcal{I}} \nabla f_j(z_c^k)^\top d + \frac{1}{2}\|d\|^2
			\\\text{s.t. }& A(z_c^k+d) \le b, \quad \mathbf{1}_n^\top(z_c^k+d) = 1 \\& d_{\bar{J}_{x_c}} = \mathbf{0}_{|\bar{J}_{x_c}|}, \quad z_c^k+d \ge \mathbf{0}_n.
		\end{aligned}
	\end{equation}
	We indicate with $v_{J_{x_c}}^\mathcal{I}(z_c^k)$ the optimal solution of the problem; similarly as $v_{J_{x_c}}(x_c)$, the solution $v_{J_{x_c}}^\mathcal{I}(z_c^k)$ is unique; clearly,  when $\mathcal{I} = \{1,\ldots,m\}$, we have $\theta_{J_{x_c}}^\mathcal{I}(z_c^k)=\theta_{J_{x_c}}(z_c^k)$ and  $v_{J_{x_c}}^\mathcal{I}(z_c^k)=v_{J_{x_c}}(z_c^k)$.
	\item Every time a new point is added to the temporary list $\widehat{\mathcal{X}}^k$, a filtering operation is performed to delete all dominated points associated with the same super support set (lines \ref{line::filtering-1}-\ref{line::filtering-2}). 
\end{itemize}

It is important to remark that each point only keeps getting compared with other solutions associated with the same support set $\mathcal{J}$, i.e., lying in the same subspace induced by $\mathcal{J}$. The algorithm thus spans ``in parallel'' different portions of the Pareto front, one for each super support set present in the set $\mathcal{X}$. 

\begin{remark}
	Since $\mathcal{X}^0$ contains feasible points, by definition of constrained descent direction (problems \eqref{eq::common-dir}-\eqref{eq::partial-dir}) and the Armijo-type line searches (lines \ref{line::Armijo_1}-\ref{line::Armijo_2}), we immediately get that Algorithm \ref{alg::SFSD} always produces feasible solutions for problem \eqref{eq::port-prob}. 
\end{remark}

We report here below the convergence properties of \texttt{SFSD}. Readers can find more details and proofs on the theoretical issues related to Algorithm \ref{alg::SFSD} in Appendix \ref{app::descent_methods}. \rev{\label{rev2.4.b}In fact, although the main principles continue to hold, the presence of constraints other than the sparsity one was not considered in the original work of \texttt{SFSD} and, thus, this significant difference have to be suitably taken into account.}

In order to state the convergence result, we need to introduce the set $X_J^k = \{x \mid \exists\, (x, J) \in \mathcal{X}^k\}$, with $J$ being a super support set, and recall the concept of \textit{linked sequence} \cite{liuzzi16}.

\begin{definition}[{\cite[Definition 4.1]{Lapucci2024cardinality}}]
	Let $X^k_J$ be the sequence of sets of nondominated points, associated with the super support set $J$, produced by Algorithm \ref{alg::SFSD}. We define a linked sequence as a sequence $\{x_{j_k}^J\}$ such that, for all $k$, the point $x_{j_k}^J \in X^k_J$ is generated at iteration $k - 1$ of Algorithm \ref{alg::SFSD} while processing point $x_{j_{k-1}}^J \in X^{k-1}_J$.
\end{definition}

\begin{proposition}
	\label{prop::conv-SFSD}
	Let us assume that $X^0_J$ is a set of mutually nondominated points and there exists $x_0^J \in X^0_J$ such that the set $\widehat{\mathcal{L}}_F(x_0^J) = \bigcup_{j=1}^m\{x \in \Omega \mid f_j(x) \le f_j(x_0^J)\}$ is compact. Moreover, let $X^k_J$ be the sequence of sets of nondominated points, associated with the super support set $J$, produced by Algorithm \ref{alg::SFSD}, and $\{x_{j_k}^J\}$ be any linked sequence. Then, $\{x_{j_k}^J\}$ admits accumulation points, each one being Pareto stationary for problem \eqref{eq::port-prob} restricted to the subspace induced by $J$.
\end{proposition}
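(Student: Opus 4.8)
The plan is to follow the convergence analysis of the original \texttt{SFSD} method, adapting each step to the presence of the additional linear constraints. The first observation is that, since all directions employed by Algorithm \ref{alg::SFSD} satisfy $d_{\bar J} = \mathbf{0}$, every element of a linked sequence $\{x_{j_k}^J\}$ lives in the fixed affine subspace $\{x \mid x_i = 0,\ i \in \bar J\}$ intersected with the convex set defined by $Ax \le b$, $\mathbf{1}_n^\top x = 1$, $x \ge \mathbf{0}_n$. On this convex region the problem reduces to a standard linearly-constrained smooth MOO problem, and — writing $\theta_J(\cdot)$, $v_J(\cdot)$ for the stationarity measure and constrained common-descent direction of \eqref{eq::common-dir} at the fixed support $J$ — these objects coincide with the steepest-descent quantities of the reduced problem. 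The claimed conclusion is then exactly $\theta_J(\bar x) = 0$ at every accumulation point $\bar x$.

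First I would establish boundedness. The crucial invariant is that, for every $k$ and every $y \in X^0_J$, the set $X^k_J$ contains a point $x$ with $F(x) \le F(y)$; this follows by induction on $k$, because points leave the working set only when a newly inserted point dominates them, and domination is a strict partial order on the finite current set, so a surviving dominator always remains. Applying this with $y = x_0^J$ yields, at each iteration, a point $p \in X^k_J$ with $F(p) \le F(x_0^J)$. Combining this with the mutual non-dominance of $X^k_J$, a short case distinction (either $F(q) = F(p)$ or $\exists\,\ell$ with $f_\ell(q) < f_\ell(p) \le f_\ell(x_0^J)$) shows that every $q \in X^k_J$ satisfies $f_\ell(q) \le f_\ell(x_0^J)$ for at least one index $\ell$, i.e. $X^k_J \subseteq \widehat{\mathcal{L}}_F(x_0^J)$. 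Since the latter is compact, the linked sequence is bounded and admits accumulation points by Bolzano--Weierstrass.

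The heart of the argument is a lemma asserting that, for every $\ell \in \{1,\dots,m\}$, the sequence $\{f_\ell(x_{j_k}^J)\}$ converges. This is where the bookkeeping of the method must be exploited: the common-descent step enforces a genuine Armijo-type sufficient decrease in \emph{all} objectives, whereas the exploration steps along the partial directions of \eqref{eq::partial-dir} only guarantee mutual non-dominance; reconciling the two so as to obtain convergence of each component along the linked sequence is the delicate point, and relies on the filtering mechanism together with the compactness just established. Granting this, I would argue by contradiction: if $\theta_J(\bar x) < 0$ at an accumulation point $\bar x = \lim_{k \in K} x_{j_k}^J$, then by continuity $\theta_J(x_{j_k}^J) \le \tfrac12\theta_J(\bar x) < 0$ for large $k \in K$, so the common-descent step forces a uniform decrease of order $\gamma\alpha_k|\theta_J(\bar x)|$ in every objective; convergence of the objective values then yields $\alpha_k \to 0$. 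Re-examining the last rejected trial step $\alpha_k/\delta$ of the line search (feasible for large $k$, since $\alpha_k/\delta < 1$ and the feasible set is convex) gives, in the limit, $\nabla f_j(\bar x)^\top v_J(\bar x) \ge \gamma\,\theta_J(\bar x)$ for some $j$. Combined with the defining inequality $\nabla f_j(\bar x)^\top v_J(\bar x) \le \theta_J(\bar x) - \tfrac12\|v_J(\bar x)\|^2$ and $\gamma < 1$, this produces $\tfrac12\|v_J(\bar x)\|^2 \le (1-\gamma)\,\theta_J(\bar x) < 0$, a contradiction; hence $\theta_J(\bar x) = 0$.

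The step I expect to require the most care — and the genuinely new ingredient relative to the sparsity-only setting of the original method — is the well-posedness and continuity of the maps $x \mapsto \theta_J(x)$ and $x \mapsto v_J(x)$ once the direction-finding subproblems \eqref{eq::common-dir}--\eqref{eq::partial-dir} carry the \emph{point-dependent} constraints $A(x+d) \le b$ and $x + d \ge \mathbf{0}_n$. Uniqueness is immediate from the strong convexity induced by the $\tfrac12\|d\|^2$ term over a convex set, but continuity now demands a parametric-optimization (maximum-theorem) argument, since the feasible region of the subproblem itself varies with the base point; this continuity is precisely what is invoked both to pass to the limit in the Armijo-failure step and to guarantee boundedness of the directions along the (bounded) linked sequence. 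The convergence-of-objective-values lemma is the other technically demanding point, for the reason explained above.
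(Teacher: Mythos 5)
Your overall route is the one the paper takes, just unpacked. The paper's own proof is a two-step reduction: it observes that a linked sequence is exactly what front steepest descent would generate on the problem restricted to the support $J$ (keeping the linear constraints), cites \cite[Proposition 3.4]{LAPUCCI2023242} to conclude $\theta_J(\bar{x})=0$ at accumulation points, and then invokes Lemma \ref{lem::tech}. Your boundedness invariant, the objective-value convergence lemma, and the Armijo-rejection contradiction are precisely the internals of that cited result, so expanding them is legitimate; the two points you flag as delicate (convergence of the $f_\ell$ values along a linked sequence, and continuity of $\theta_J(\cdot),v_J(\cdot)$ when the subproblem's feasible region moves with the base point) are indeed where that machinery does its work.

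There is, however, one genuine gap, and it is exactly the step the paper isolates as Lemma \ref{lem::tech}. You claim that the proposition's conclusion ``is then exactly $\theta_J(\bar{x})=0$'', with $\theta_J$ defined through subproblem \eqref{eq::common-dir}. It is not: \eqref{eq::common-dir} minimizes over the set of feasible \emph{displacements} $\{d \mid A(\bar{x}+d)\le b,\ \mathbf{1}_n^\top(\bar{x}+d)=1,\ \bar{x}+d\ge\mathbf{0}_n,\ d_{\bar{J}}=\mathbf{0}_{|\bar{J}|}\}$, whereas Pareto stationarity for the restricted problem --- the property actually asserted by Proposition \ref{prop::conv-SFSD}, via Lemma \ref{lem::par-stat} --- requires the same minimum to vanish over the \emph{cone of feasible directions} at $\bar{x}$ (in active-constraint form: $a_i^\top d\le 0$ for active rows, $\mathbf{1}_n^\top d=0$, $d_i\ge 0$ whenever $\bar{x}_i=0$, $d_{\bar{J}}=\mathbf{0}_{|\bar{J}|}$). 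Since the displacement set is contained in the cone, vanishing over the cone trivially implies vanishing over displacements, but your implication goes the other way and needs an argument: if some cone direction $d$ gave a negative value, then by polyhedrality $td$ is a feasible displacement for all small $t\in(0,1)$, and it yields the value $t\bigl(\max_j\nabla f_j(\bar{x})^\top d + \tfrac{t}{2}\|d\|^2\bigr)<0$, contradicting $\theta_J(\bar{x})=0$. This scaling argument is the content of Lemma \ref{lem::tech}, which the paper uses as the final step of its proof; without it (or an equivalent argument), your proof establishes displacement-stationarity but never reaches the statement being proved.
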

\begin{proof}
	See Appendix \ref{app::descent_methods}.
\end{proof}

\subsection{Generation of Starting Points}
\label{subsec::phase1}

Algorithm \ref{alg::SFSD} effectively works if solutions with ``good'' supports are provided as input (line \ref{line::initialization}). In this section, we list some of the possible methods that can be used in this preliminary, yet crucial phase.

\begin{description}[font=\textbf]
	\item[Weighted-sum Scalarization Method] A simple choice is to solve multiple times the scalarized problem with different configurations for the weights $\lambda$. However, as already outlined in section \ref{sec::limits_scalarization}, the \rev{linear} scalarization method may miss ``good'' supports and, as a consequence, may not allow \texttt{SFSD} to span large portions of the Pareto front. The following strategies aim to overcome this issue.
	\item[Descent Methods] 
	Following the study in \cite{Lapucci2024cardinality}, gradient based methods can be exploited, running in a multi-start fashion, for this task:
	\begin{itemize}
		\item The first available option is the \textit{Multi-objective Sparse Penalty Decomposition} (\texttt{MOSPD}) approach \cite{Lapucci2022apenalty}, which generates a sequence $\{(x_{k+1}, y_{k+1})\}$ such that, for all $k$, $x_{k+1}$ is approximately Pareto-stationary for the problem
		\begin{align*}
			\min_{x}\;&Q^{\tau_k}(x, y_{k+1}) = F(x) + \mathbf{1}_m\frac{\tau_k}{2}\|x - y_{k+1}\|^2\\\text{s.t. }&Ax\le b, \quad \boldsymbol{1}_n^\top x=1,\quad x\ge \boldsymbol{0}_n,
		\end{align*} with $\tau_k > 0$, whereas $y^{k+1}$ is the projection of $x^{k+1}$ onto the sparse feasible set, i.e., $$y^{k+1} \in \argmin_{y \in \mathbb{R}^n, y \ge \mathbf{0}_n, \|y\|_0 \le s}\frac{1}{2}\|x^{k+1}-y\|^2.$$ \rev{\label{rev1.4.a} As for the first problem, we found in preliminary experiments that the explicit handling of simplex constraints was beneficial w.r.t.\ incorporating them into the penalty term - as we would need to do with more general constraints.}
		Algorithmically, such a pair $(x_{k+1}, y_{k+1})$ is obtained at each iteration by means of an \textit{alternate minimization} scheme. As $\tau_k\to\infty$, $\{x^k\}$ is proved to converge to limit points $\bar{x}$, each one Pareto-stationary for the problem restricted to the subspace induced by one of the super support sets in $\mathcal{J}(\bar{x})$; computationally, the Penalty Decomposition approach was shown to be effective at finding good quality solutions. 
		\item A second approach, called \textit{Multi-Objective Iterative Hard Thresholding} (\texttt{MOIHT}) \cite{Lapucci2024cardinality}, can also be adapted to deal with additional linear constraint and finally generate a solution $x^\star$ such that 
		\begin{equation*}
			\begin{aligned}
				\theta_L(x^\star) = \min_{d \in \mathbb{R}^n}\;&\max_{j \in \{1,\ldots,m\}} \nabla f_j(x^\star)^\top d + \frac{L}{2}\|d\|^2 = 0
				\\\text{s.t. }& A(x^\star+d) \le b, \quad \mathbf{1}_n^\top(x^\star+d) = 1 \\& \|x^\star + d\|_0 \le s, \quad x^\star+d \ge \mathbf{0}_n,
			\end{aligned}
		\end{equation*}
		with $L>0$.
		The above condition, called $L$-stationarity, is necessary for optimality and is stronger \rev{- i.e., implies and is not necessarily implied by -} than the \rev{Pareto-stationarity type condition} achieved by \texttt{MOSPD} and \texttt{SFSD}. 
		\item A hybrid approach is also possible where \texttt{MOIHT} and \texttt{MOSPD}  are run in cascade; the solutions resulting from the two executions are then merged to form the set to give as input to \texttt{SFSD}; doing so, we can exploit both the exploration capabilities of \texttt{MOSPD} and the stronger $L$-stationarity condition to escape from bad Pareto stationary points. More information on the theoretical properties of the two approaches\rev{\label{rev2.4.c}, modified to account for the presence of constraints other than sparsity,} can be found in Appendix \ref{app::descent_methods}.
	\end{itemize}

	\item[Genetic Methods] Genetic approaches, although merely heuristic, have proved to be effective with highly irregular problems. The \textit{Non-dominated Sorting Genetic Algorithm II} (\texttt{NSGA-II}) \cite{deb02} is a famous instance of this class. 
	
	Like any other evolutionary approach, \texttt{NSGA-II} works with a fixed size population of $N$ solutions; the latter is evolved at each iteration through the application of specific random-based operators: \textit{crossover}, \textit{mutation} and	\textit{selection}. In particular, the first two operations aim to generate new and hopefully better solutions from selected ones of the current population; after the addition of the new solutions, the resulting population is then reduced leaving exactly $N$ members. The selection is driven by two metrics, i.e., \textit{ranking} (sorting solutions based on the dominance relation) and \textit{crowding distance} (measuring solution density).
	
	Actually, \texttt{NSGA-II} has been already proposed to solve sparse MOO portfolio problems \cite{Chen2017}: the only fix needed by the base algorithm is to project each point onto the feasible set before being inserting it into the population. \rev{\label{rev1.4.b}The latter operation can in fact be approximated by projecting onto the sparse set and  then dividing the result by its $\ell_1$-norm -- both being computationally cheap steps.}
	
	On the other hand, a memetic extension of \texttt{NSGA-II}, called \texttt{NSMA}, has been recently proposed for global continuous multi-objective optimization \cite{Lapucci2023}: at the end of each \texttt{NSGA-II} iteration, all points in the population are optimized via a descent method. Here we can thus think of adapting \texttt{NSMA} to tackle problem \eqref{eq::port-prob}. Instead of simply projecting the offspring solutions onto the feasible set, we can indeed carry out a sequence of steps of \texttt{MOIHT}: not only we are guaranteed to satisfy the sparsity constraint, but we also improve the quality of the solution, speeding up the whole process.
\end{description}

\smallskip
\noindent The preliminary phase is completed associating each of the solutions found with one of the above methods to one of its super support sets. Finally, $\mathcal{X}^0$ is created filtering out dominated points w.r.t.\ $F$.

\section{Computational Experiments}
\label{sec::experiments}

In this section, we present vast computational experiments aimed at identifying the most effective strategies to tackle problems of the form \eqref{eq::port-prob}. In the following, we are firstly going to analyze the available options for generating initial solutions within \texttt{SFSD}, measuring the variety of ``optimal'' supports retrieved; then, we will show the empirical benefits of using \texttt{SFSD} compared to other baseline approaches; we will finally compare different \texttt{SFSD}-based algorithms with state-of-the-art methodologies, reporting the overall performance in a large collection of problems.

To carry out this detailed analysis, we considered a benchmark of real-world datasets consisting in both daily and monthly prices of a subset of stocks belonging to major financial indices. Similarly as in \cite{Lapucci2022apenalty}, we employed datasets DTS1, DTS2 and DTS3 - obtained from daily prices of the FTSE 100 index from 01/2003 to 12/2007 and containing 12, 24 and 48 securities respectively - and FF10, FF17 and FF48 - extracted from Fama/French benchmark collection, using monthly returns from 07/1971 to 06/2011 and containing 10, 17, 48 securities. In both cases datasets were generated following \cite{BritoVicente2014Efficient, Cocchi2020Concave}, resulting in a set of 6 Mean-Variance portfolio problems.

We furthermore considered four datasets extracted from major financial indexes: EuroStoxx 50, NASDAQ 100, FTSE 100 and S\&P 500. These datasets\footnote{Daily prices and ESG scores are available at \href{https://www.francescocesarone.com/data-sets}{francescocesarone.com/data-sets.}} consist of daily prices, adjusted for dividends and stock splits, and of daily ESG scores obtained from Refinitiv for the time window from 01/01/2013 to 12/31/2020. We assume each index as the benchmark for beta calculations. We selected 50, 54, 80 and 101 stocks from each index respectively. These datasets contain all the parameters required to compute the objective functions mean, variance, ESG score, Sharpe ratio and skewness and also the beta constraints.The expected returns, the covariance matrix and the coskewness tensor were computed from historical data following the sample estimation approach as in \cite{LaiTsong}. The betas for individual stocks were also computed using sample-based methods, assuming the market portfolio returns to be those of the corresponding benchmark index. For the ESG scores, we assumed the last observed value in the respective time series as the score for each stock. 

For each dataset, different instances of the problem were considered, changing the value $s$ of the upper bound on the $\ell_0$ norm. Table \ref{tab::problems} summarizes the details for each dataset employed in the experimentation. For the datasets EuroStoxx, NASDAQ, FTSE and S\&P we considered the \rev{following} combinations of objectives\rev{:}
\rev{\label{rev1.2.a}\begin{itemize}
    \item $m=2$: Mean-Variance, SR-ESG, SR-Skew, ESG-Skew;
    \item $m=3$: SR-ESG-Skew, Variance-Mean-ESG, Variance-Mean-Skew
    \item $m=4$: Variance-Mean-ESG-Skew. 
\end{itemize}}
For each combination we consider the problem with and without beta constraints (in the former case $\beta_{\text{min}}=0.8$ and $\beta_{\text{max}} = 1.2$). 
In total, the benchmark  consists of $442$ problems ($210$ with $s \le 10$ and $232$ with $s>10$). 
To obtain values at similar scales for all the objective functions, we multiplied the objectives by a $10^2$ factor for both variance and mean, $10^{-2}$ for ESG score, $10^{-1}$ for skewness, while no scaling was applied to Sharpe ratio. 

\begin{table*}
	\centering
	\footnotesize
	\caption{Details of the datasets employed in the experimental analysis.}
	\label{tab::problems}
	\renewcommand{\arraystretch}{1.4}
	\begin{tabular}{|c||c|c|}
		\hline
		\multirow{2}{*}{\textbf{Dataset}} & \multirow{2}{*}{$\mathbf{n}$} & \multirow{2}{*}{$\mathbf{s}$} \\
		&&\\
		\hline \hline
		DTS1 & 12 & 2,5,10 \\ \hline
		DTS2 & 24 & 2,5,10,15 \\ \hline
		DTS3 & 48 & 2,5,10,15,20,30 \\ \hline
		FF10 & 10 & 2,5,10 \\ \hline
		FF17 & 17 & 2,5,10,15 \\ \hline
		FF48 & 48 & 2,5,10,15,20,30 \\ \hline
		EuroStoxx & 50 & 2,5,10,15,20,30  \\ \hline
		NASDAQ & 54 & 2,5,10,15,20,30 \\ \hline
		FTSE & 80 & 2,5,10,15,20,30,50 \\ \hline
		S\&P & 101 & 2,5,10,15,20,30,50  \\ \hline
		
		\hline
	\end{tabular}
\end{table*}

In all the experiments we set $10^{-7}$ as tolerance for nonzero components in the evaluation of the zero norm sparsity. In \texttt{MOIHT}, $L$ is selected as $1.1$ times the Lipschitz constant of $f^V$, i.e., $L=1.1\lambda_{\text{max}}(Q)$, with $\lambda_{\text{max}}(Q)$ being the largest eigenvalue of matrix $Q$; we set $\theta_{tol}=-10^{-7}$ as a threshold to discriminate stationary points. In \texttt{MOSPD}, we employ $\|x_{k+1} - y_{k+1}\| \le 10^{-3}$ as stopping condition, $\tau_0=10^{-2}$, $\tau_{k+1}=2 \tau_k$, $\varepsilon_0=10^{-3}$ and $\varepsilon_{k+1} = 0.9 \varepsilon_k$. For \texttt{NSGA-II} and \texttt{NSMA} we set the maximal population size to 100, while standard settings for crossover and mutation probabilities are used. Single-objective problems resulting from \rev{linear} scalarization are solved (to global optimality) using Gurobi v11 \cite{gurobi} for problems with linear and quadratic objectives, whereas the penalty decomposition scheme \cite{lapucci2021convergent} is used in the other cases. 
Gurobi v11 is also used to solve all subproblems associated with the computation of descent directions.

For \texttt{SFSD} we set $\theta_{tol} = -10^{-7}$; in order to improve the efficiency of \texttt{SFSD}, points are processed by first considering those that are not dominated. Moreover, a condition based on the crowding distance \cite{deb02} determines the points that are processed within a fixed support, so as to prevent the production of points too close to each other. 
For Armijo-type line-searches we set $\delta=0.5$ and $\gamma=10^{-4}$.  

The initial solutions for \texttt{MOHyb}, \texttt{NSGA-II} and \texttt{NSMA} consist of $2n$ feasible portfolios, where $n$ is the dimensionality of the problem considered. The first $n$ are selected as the components of the standard basis in $\mathbb{R}^n$, the remaining ones are random sparse vectors in $\mathbb{R}^n$. Projection of these points onto the feasible set is then performed.
In \texttt{MOHyb} both \texttt{MOIHT} and \texttt{MOSPD} are fed with the same starting solutions as above. As for the \rev{linear} scalarization method, $2n$ choices of $\lambda$ are considered in order to span as evenly as possible the combinations of the $m$ objectives, while a random feasible portfolio is selected as starting point when needed. Given the randomness in the initialization, we ran each experiment 5 times with different seeds.

The code\footnote{The code is publicly available at \href{https://github.com/dadoPuccio/MO-Portfolio}{github.com/dadoPuccio/MO-Portfolio}.} of the computational experiments reported in this section was implemented in \texttt{Python3} and executed on a computer with an Intel Xeon Processor E5-2430 v2 6 cores 2.50 GHz and 32 GB RAM running Ubuntu 22.04.

\subsection{Analysis of Supports at Initialization}
\label{subsec::phase_1}

As a starting point of our experimental analysis, we need to identify the most effective way to provide \texttt{SFSD} with as many supports associated with Pareto solutions as possible. To accomplish this task, we approximated the true Pareto front of each problem by merging all the front reconstructions obtained by running, from $5$ different seeds ($4$ minutes for each seed and algorithm), \texttt{MOHyb}, \texttt{NSGA-II}, \texttt{NSMA} and the \rev{linear} scalarization method, all followed by \texttt{SFSD} (for $2$ additional minutes). To improve the accuracy of the reference front, we did 5 additional runs of pure \texttt{NSGA-II} ($6$ minutes per run).

Figure \ref{fig::reference} shows the reference fronts obtained for two bi-objective problems: the mean-variance problem on DTS2 and the SR-Skew problem on EuroStoxx. We can observe that, as the value of the $\ell_0$ threshold $s$ increases, the number of optimal supports found grows. This behavior is somewhat natural, as larger values of $s$ lead to an increasing number of possible supports.

\begin{figure}[htbp]
	\centering
	\subfloat[Reference front of DTS2 mean-variance problem - $s=2$.]{\includegraphics[width=0.32\textwidth]{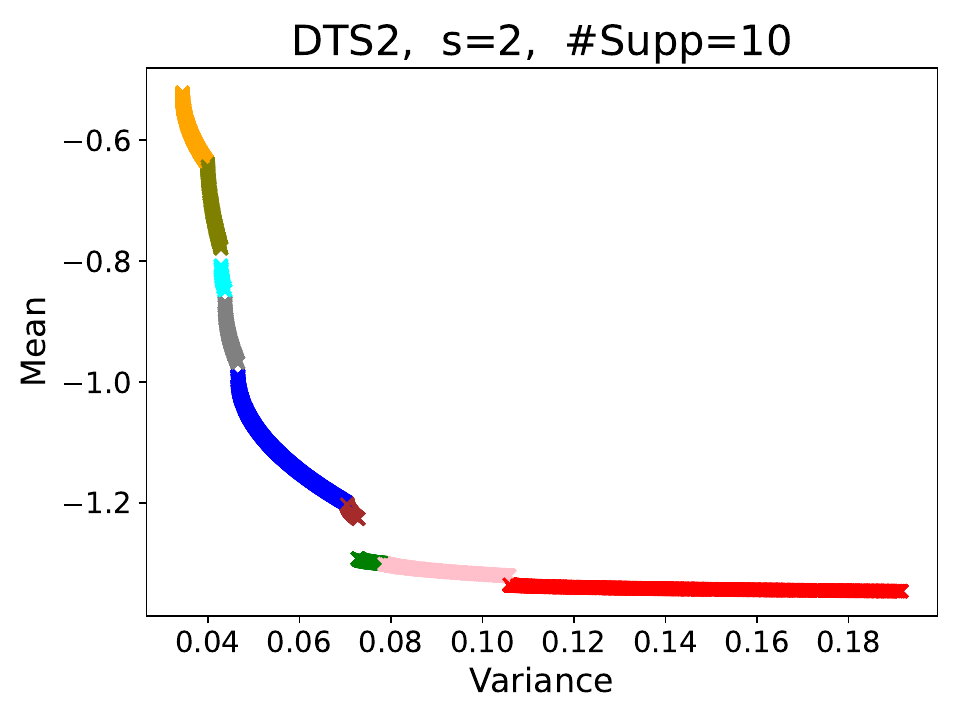}}
	\hfill
	\subfloat[Reference front of DTS2 mean-variance problem - $s=5$.]{\includegraphics[width=0.32\textwidth]{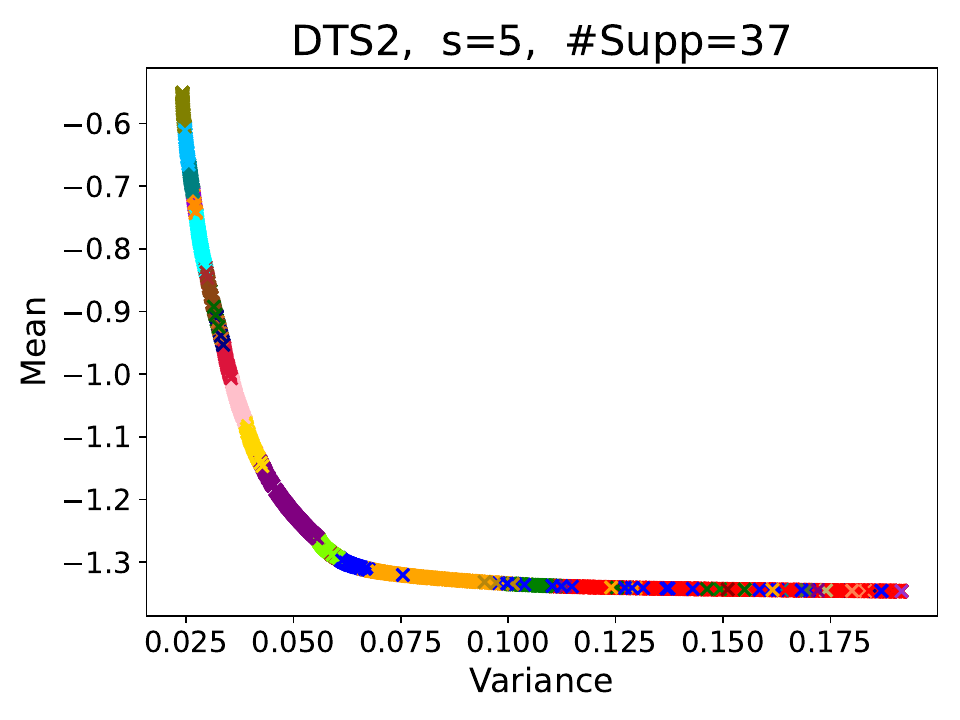}}
	\hfill
	\subfloat[Reference front of DTS2 mean-variance problem - $s=10$.]{\includegraphics[width=0.32\textwidth]{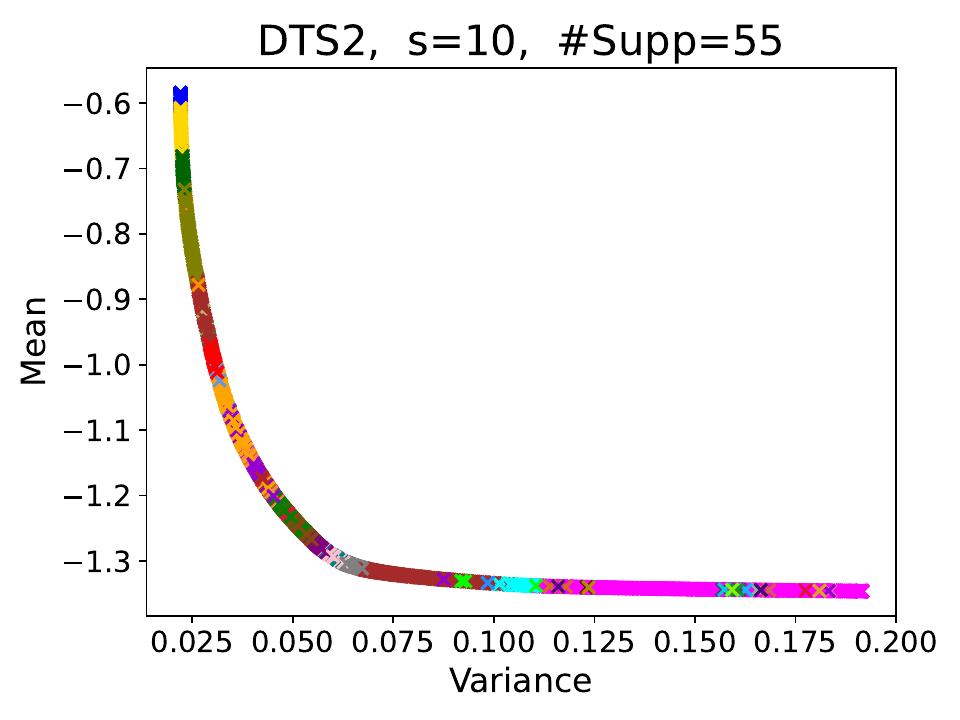}}
	\\
	\subfloat[Reference front of EuroStoxx SR-Skew problem - $s=2$.]{\includegraphics[width=0.32\textwidth]{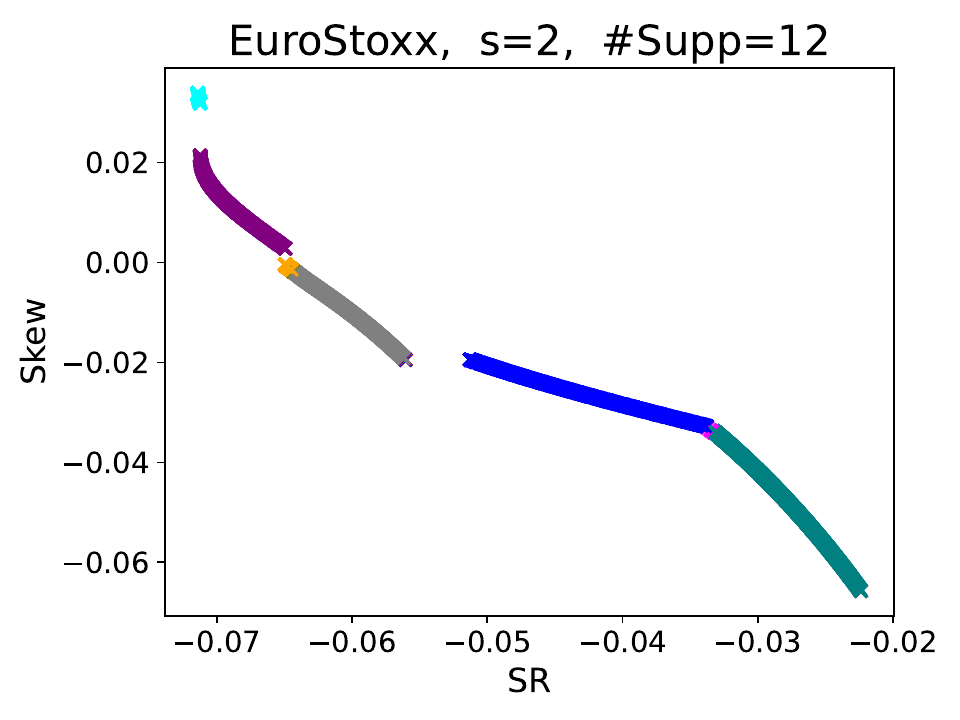}}
	\hfill
	\subfloat[Reference front of EuroStoxx SR-Skew problem - $s=5$.]{\includegraphics[width=0.32\textwidth]{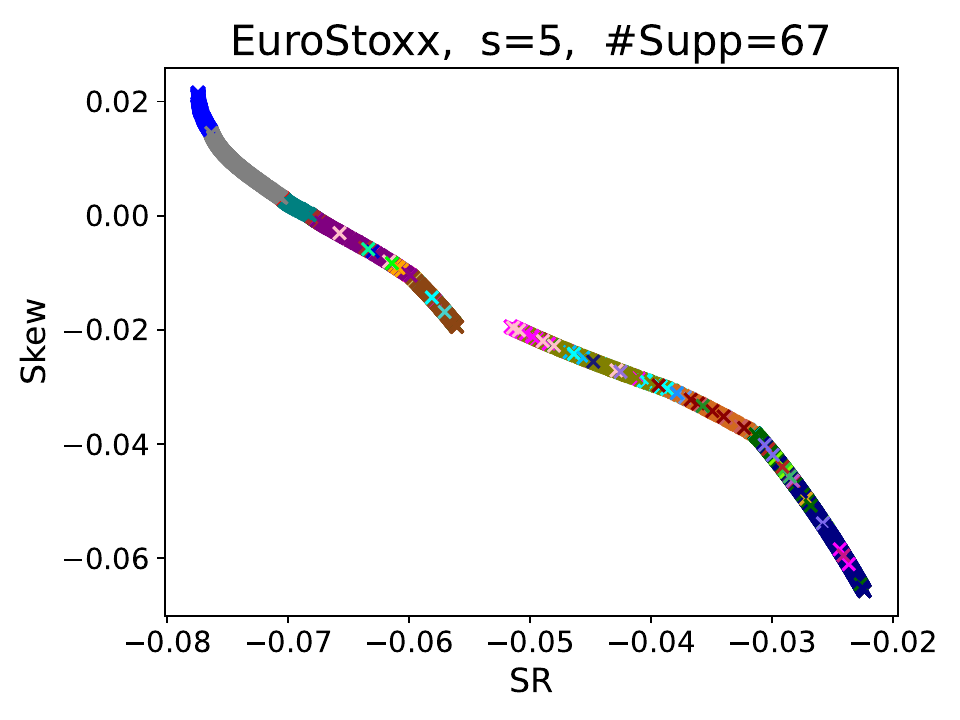}}
	\hfill
	\subfloat[Reference front of EuroStoxx SR-Skew problem - $s=10$.]{\includegraphics[width=0.32\textwidth]{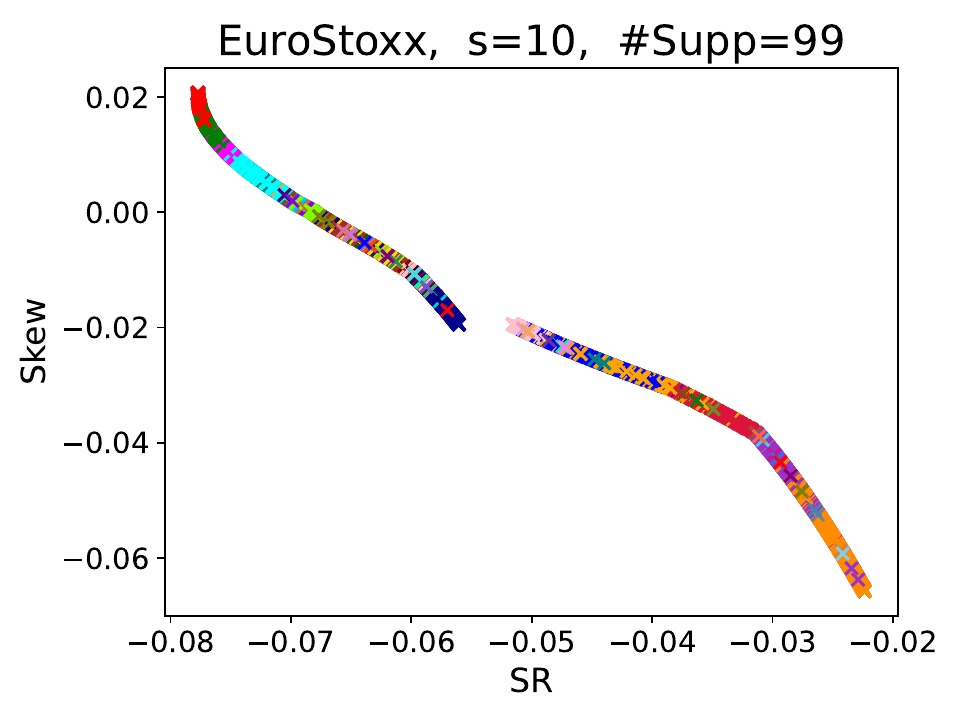}}
	\caption{Reference fronts of selected problems. Different colors denote different supports of the corresponding solutions. The title of each plot reports also the total number of optimal supports in the reference front.}
	\label{fig::reference}
\end{figure}

In order to evaluate each initialization strategy, we measure, for each problem, the ratio between the number of different optimal supports found over the total number of optimal supports in the reference front. Given $S^{(p)}_{m}$ the set of supports of the solutions found in problem $p$ by the method $m$, and $S^{(p)}_R$ the set of supports of the solutions in the reference front of problem $p$, we compute the \textit{recall} of method $m$ in problem $p$ as
$R^{(p)}_{m} = \frac{|S^{(p)}_m \cap S^{(p)}_R|}{|S^{(p)}_R|}$. We evaluated \texttt{MOHyb}, \texttt{NSGA-II}, \texttt{NSMA} and the \rev{linear} scalarization method according to this metric, running them on the entire benchmark of 442 problems \rev{\label{rev1.2.b}involving two or more objective functions}. All methods share a $4$ minutes time limit as a stopping condition. 

\begin{figure}[htbp]
	\centering
	\subfloat[Plot of the recall cumulative in problems with $s\le10$ without beta constraints.]{\includegraphics[width=0.4\textwidth]{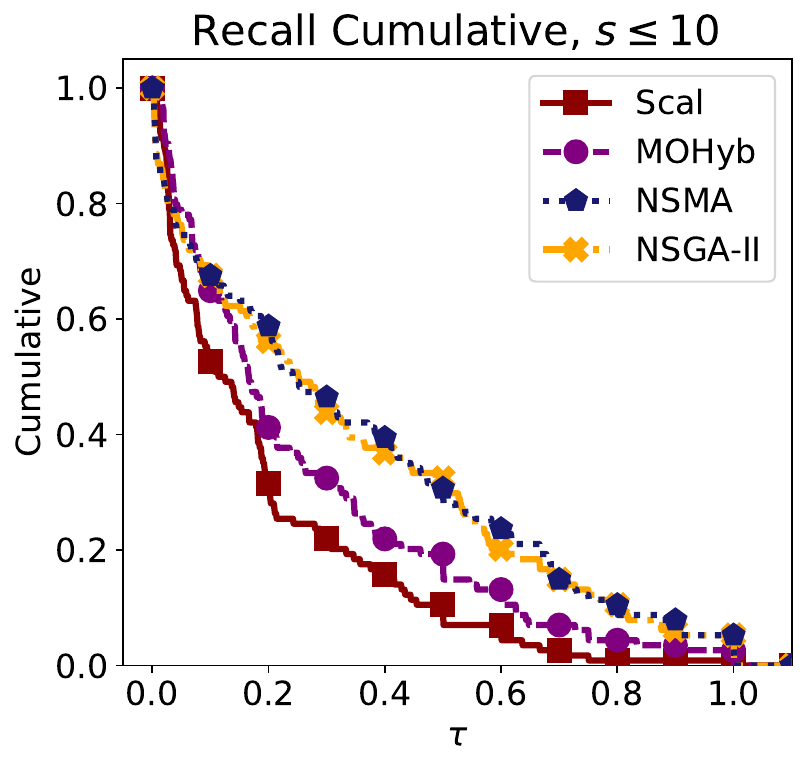}}
	\hfil
	\subfloat[Plot of the recall cumulative in problems with $s>10$ without beta constraints.]{\includegraphics[width=0.4\textwidth]{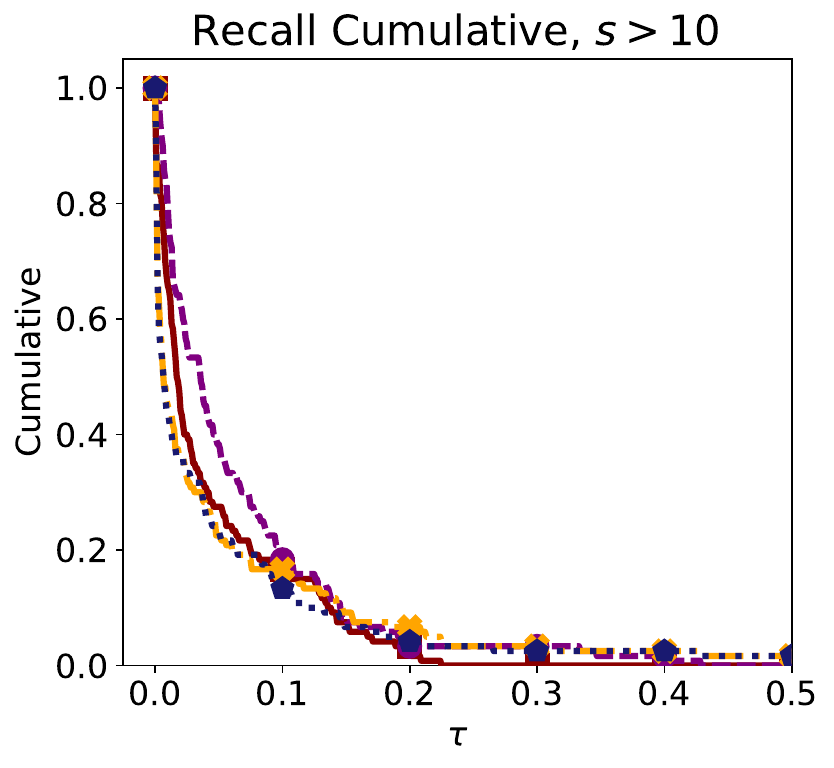}}
	\hfil
	\\
	\subfloat[Plot of the recall cumulative in problems with $s\le10$ with beta constraints.]{\includegraphics[width=0.4\textwidth]{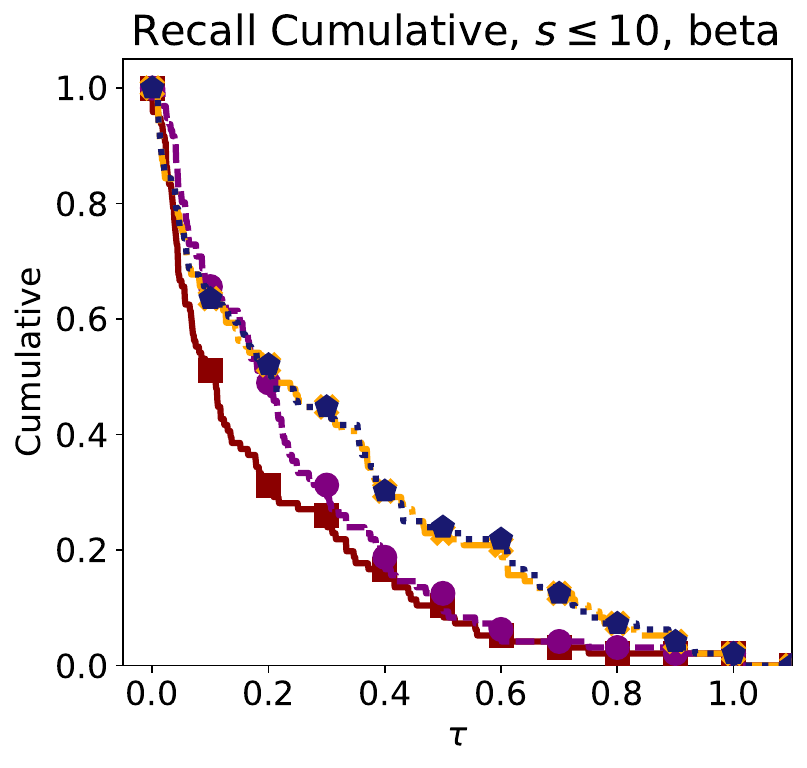}}
	\hfil
	\subfloat[Plot of the recall cumulative in problems with $s>10$ with beta constraints.]{\includegraphics[width=0.4\textwidth]{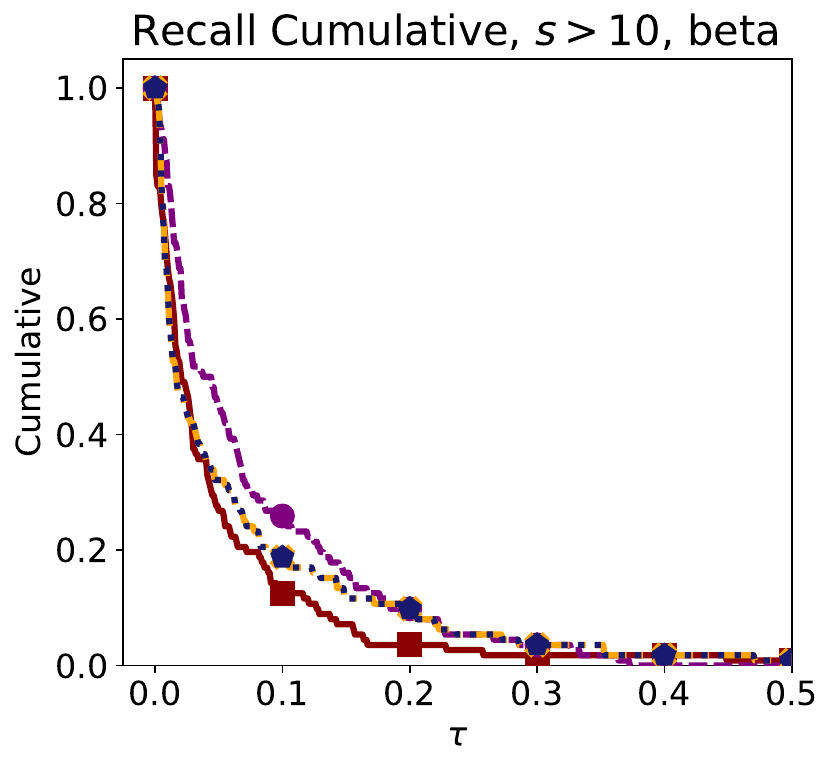}}
	\caption{Plots of the cumulative, across problems \rev{involving two or more objective functions}, of the ratio of optimal supports found by \texttt{MOHyb}, \texttt{NSGA-II}, \texttt{NSMA} and the \rev{linear} scalarization method, relative to the optimal supports of the reference front.}
	\label{fig::opt_supp}
\end{figure}

Given the large number of problems considered, we summarize the outcome of this analysis by plotting the cumulative distribution of the recall values across all problems. The plots in Figure \ref{fig::opt_supp} present in the x-axis the recall threshold $\tau$, while the y-axis indicates the percentage of problems for which the recall is at least $\tau$. We split the analysis between small values of maximal cardinality ($s \le 10$) and large maximal cardinality ($s > 10$), and also between problems with and without beta constraints. We can observe that for smaller values of $s$ the most effective methods are  \texttt{NSMA} and \texttt{NSGA-II}, as their cumulative curve is consistently above other methods, meaning that a larger portion of optimal supports is generally retrieved. For greater values of $s$, \texttt{MOHyb} instead appears to be the most effective one, as the cumulative curve for this method is slightly above the others, especially for smaller values of $\tau$. No significant change is observed when considering problems with the beta constraints.

\subsection{Impact of SFSD-based exploration}
\label{sec:sfsd_impact}

To demonstrate the benefits provided by the employment of \texttt{SFSD}, we compare the Pareto front reconstructions of \texttt{MOHyb}, \texttt{NSGA-II}, \texttt{NSMA} and the \rev{linear} scalarization method, when \texttt{SFSD} is employed on top of those, against the case no further processing of points is performed after the first phase. Each ``phase 1'' method is run, as usual, for 4 minutes, then \texttt{SFSD} is used with a time limit of 2 minutes. Each algorithm is executed 5 times from different initialization, then the solutions of the run that obtain maximal hypervolume \cite{zitzler98} are selected as the best front reconstruction for that method. Figures \ref{fig::SFSD_comp_DTS}-\ref{fig::SFSD_comp_Euro} report the best fronts for each algorithm when \texttt{SFSD} is used and not, on the DTS2 mean-variance problem ($s=2$) and the EuroStoxx SR-Skew problem ($s=5$). We observe that \texttt{SFSD} allows to fill the gaps in the Pareto front, offering a wider set of solutions for the problems considered. We can also note that using \texttt{SFSD} on top of \texttt{NSGA-II} has also the additional effect of driving solutions towards stationarity, since derivatives are used only in this second phase. 

\begin{figure}[htbp]
	\centering
	\subfloat{\includegraphics[width=0.39\textwidth]{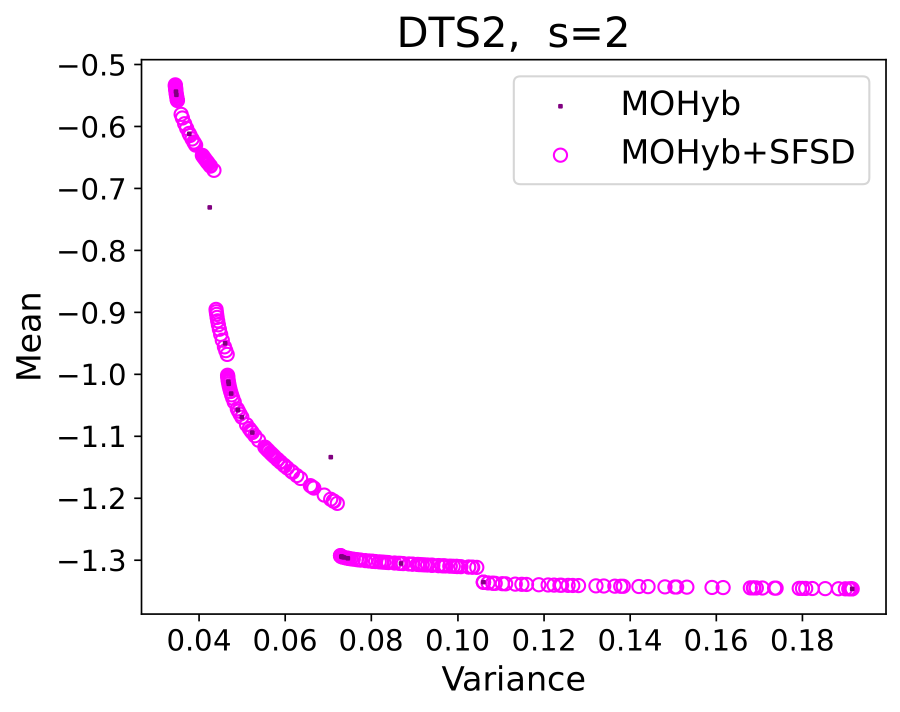}}
	\hfil
	\subfloat{\includegraphics[width=0.39\textwidth]{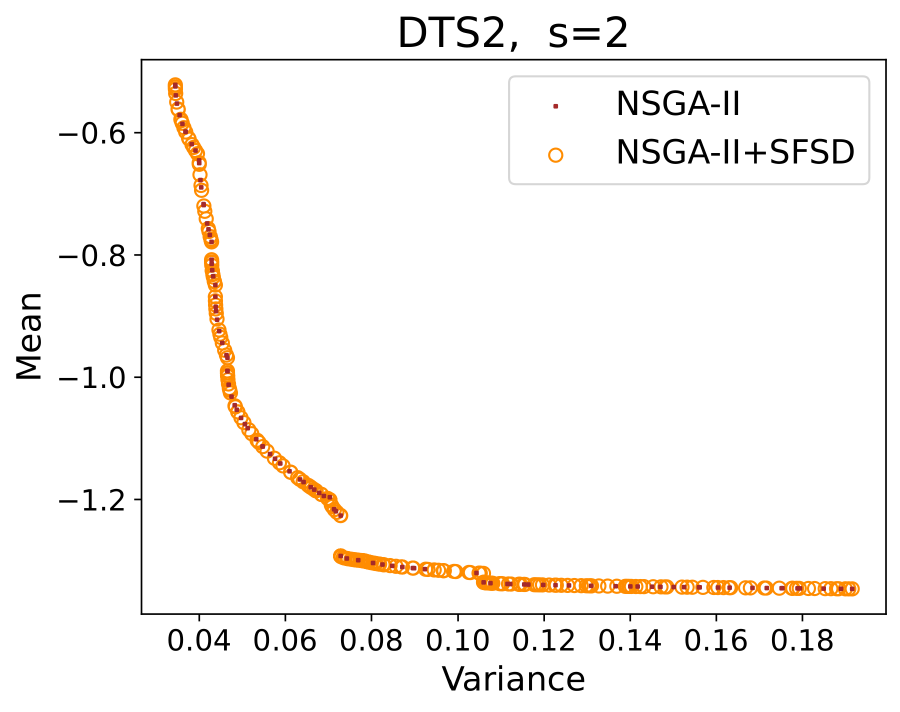}}
	\\
	\subfloat{\includegraphics[width=0.39\textwidth]{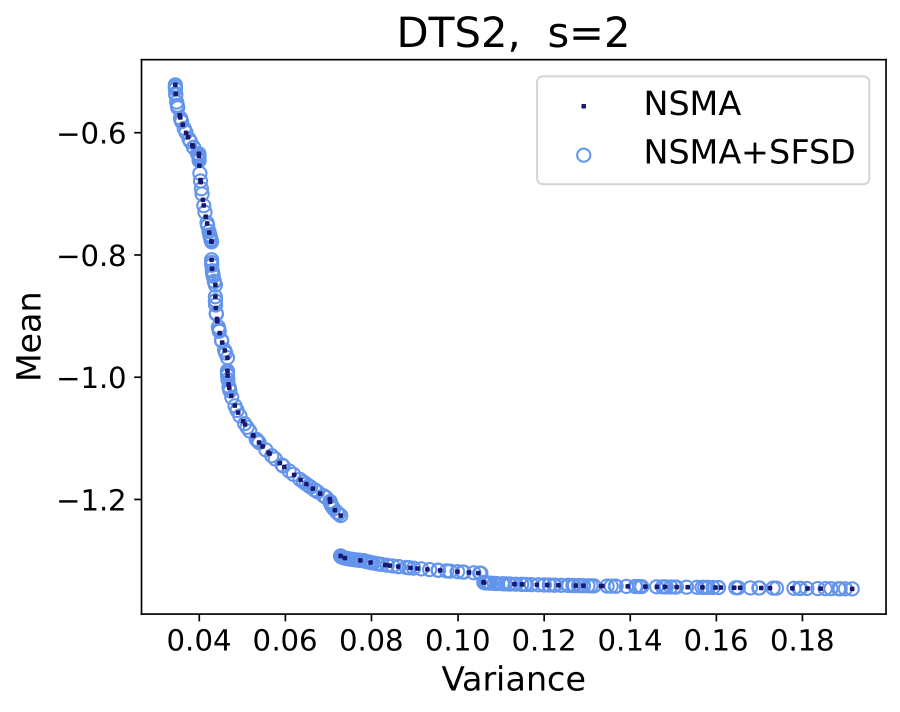}}
	\hfil
	\subfloat{\includegraphics[width=0.39\textwidth]{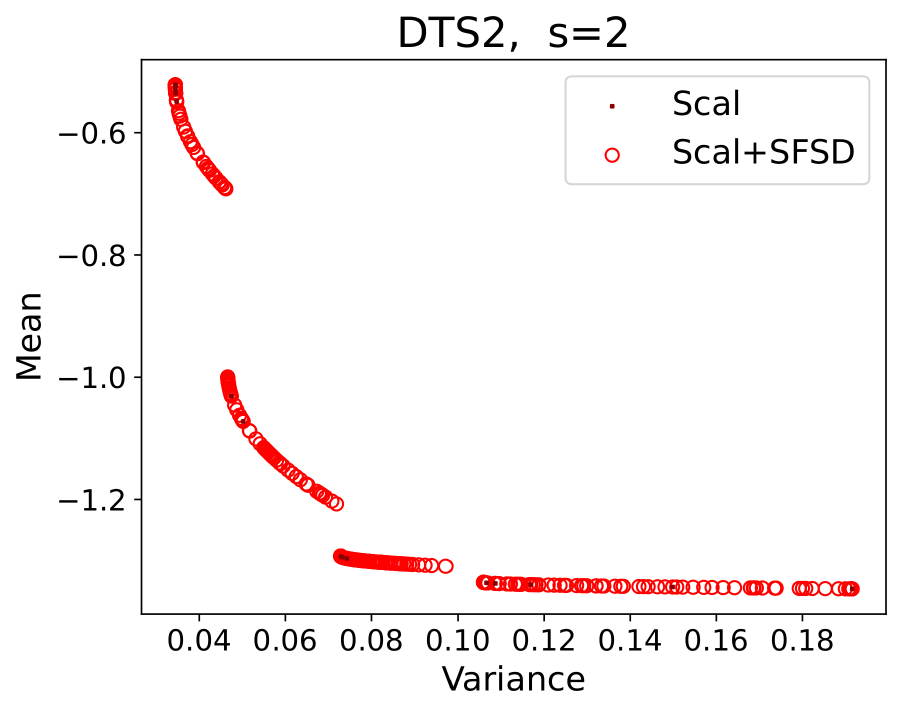}}
	\caption{Comparison of the Pareto front reconstructions of \texttt{MOHyb}, \texttt{NSGA-II}, \texttt{NSMA} and the \rev{linear} scalarization method, when \texttt{SFSD} is used, on the DTS2 mean-variance problem in opposition to the scenario where no further processing of solutions is performed after ``phase 1''.}
	\label{fig::SFSD_comp_DTS}
\end{figure}

\begin{figure}[htbp]
	\centering
	\subfloat{\includegraphics[width=0.41\textwidth]{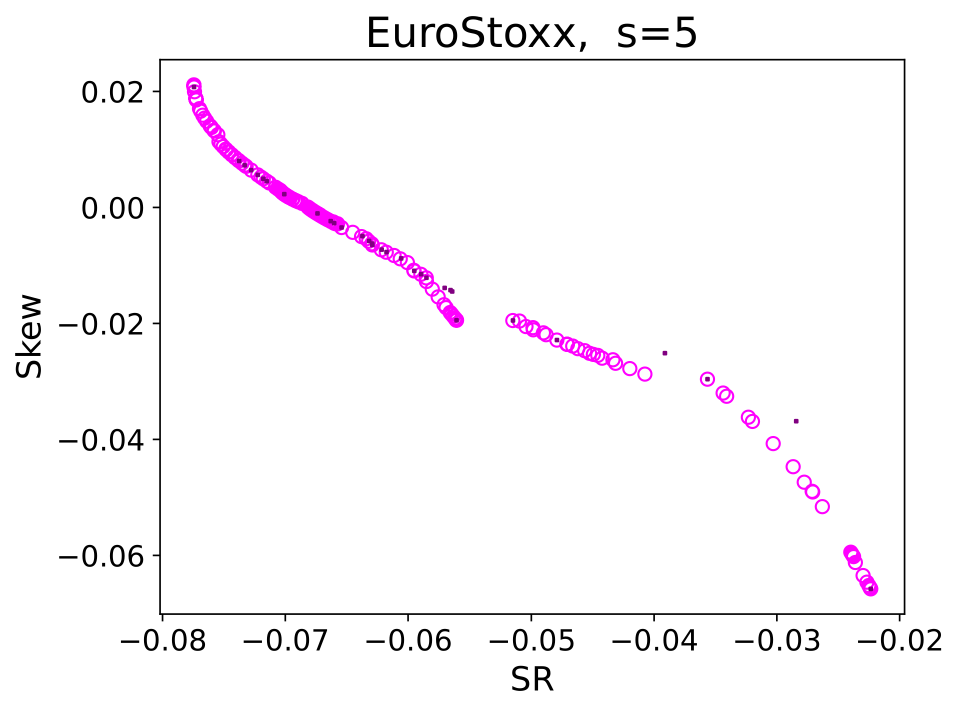}}
	\hfil
	\subfloat{\includegraphics[width=0.41\textwidth]{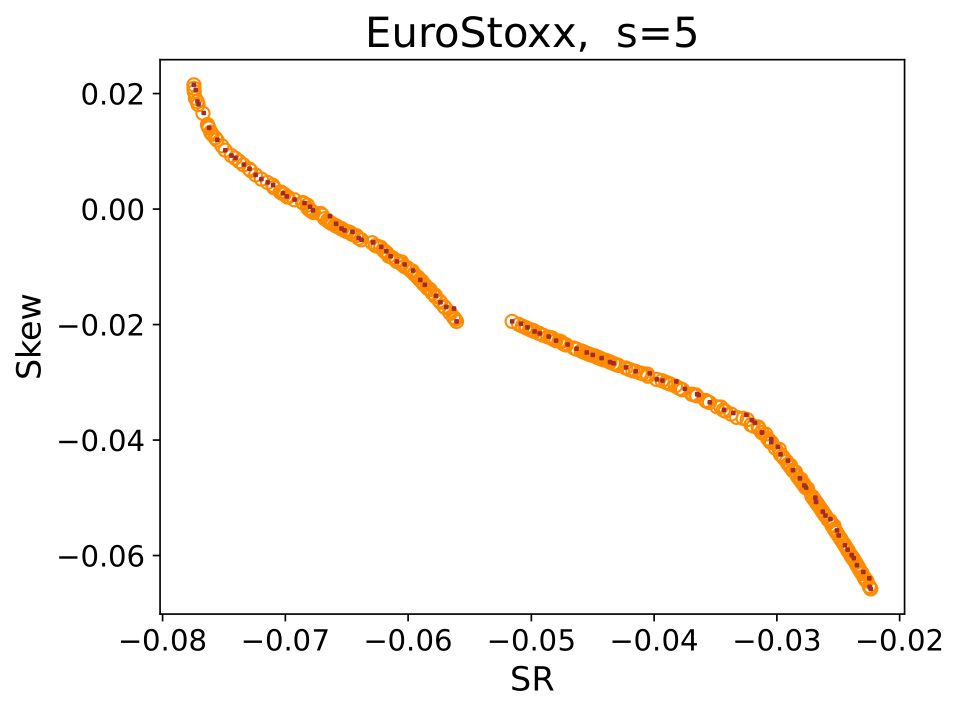}}
	\\
	\subfloat{\includegraphics[width=0.41\textwidth]{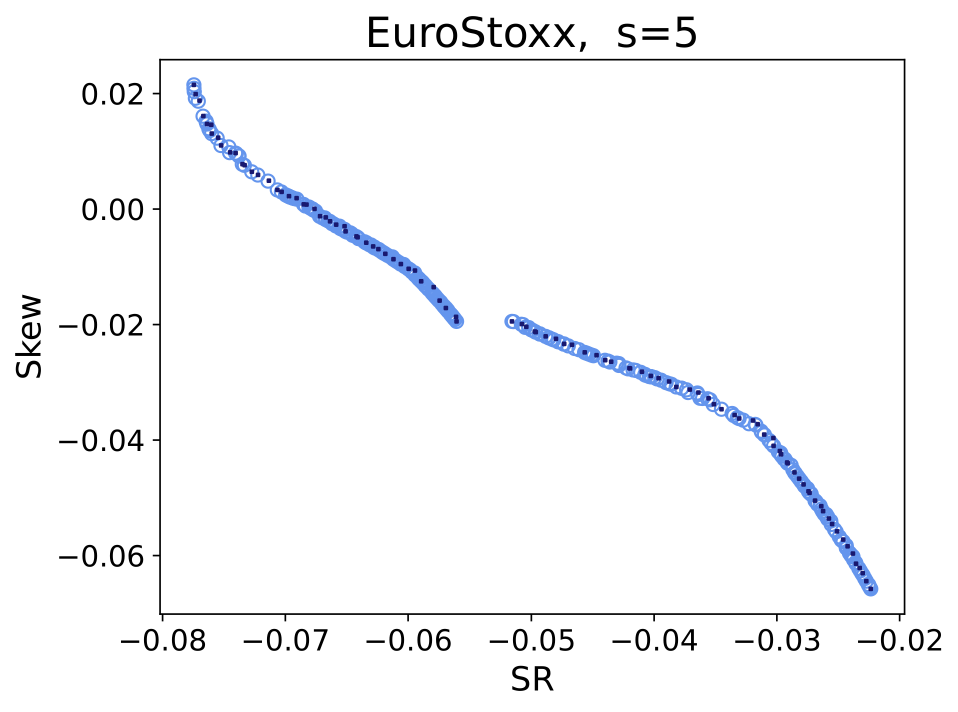}}
	\hfil
	\subfloat{\includegraphics[width=0.41\textwidth]{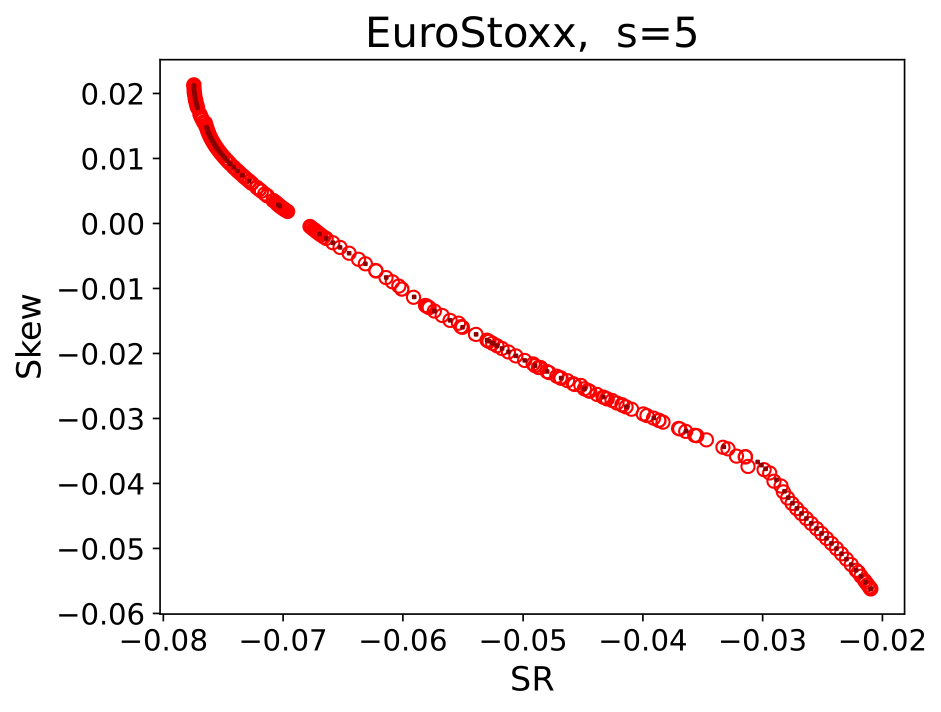}}
	\caption{Comparison of the Pareto front reconstructions of \texttt{MOHyb}, \texttt{NSGA-II}, \texttt{NSMA} and the \rev{linear} scalarization method, when \texttt{SFSD} is used, on the EuroStoxx SR-Skew problem in opposition to the scenario where no further processing of solutions is performed after ``phase 1''.}
	\label{fig::SFSD_comp_Euro}
\end{figure}

We shall observe that the solutions found by \texttt{SFSD} highly depend on the initialization points provided by ``phase 1'' methods. To better investigate on this aspect, we report in Figure \ref{fig::SFSD_comp_all} a comparison of the Pareto front reconstructions when using \texttt{SFSD} on top of \texttt{MOHyb}, \texttt{NSGA-II}, \texttt{NSMA} and the \rev{linear} scalarization method, in a subset of convex and non convex problems. For the sake of completeness, we also report in the plots the ``best'' Pareto front reconstruction of \texttt{NSGA-II} executed with a time limit of 6 minutes, without \texttt{SFSD}. 

\begin{figure}[htbp]
	\centering
	\subfloat{\includegraphics[width=0.33\textwidth]{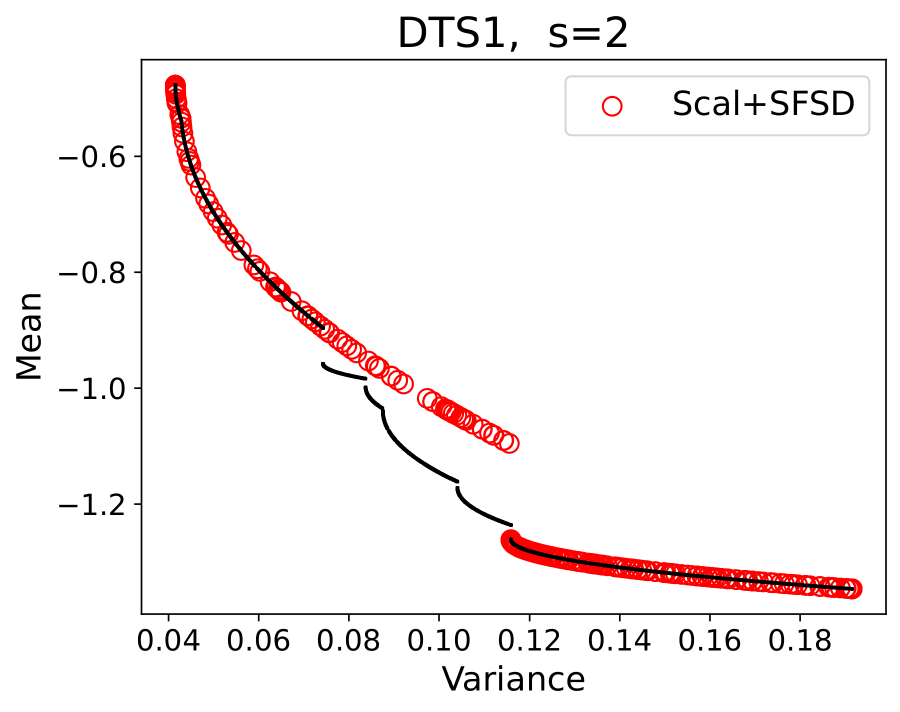}}
	\hfil
	\subfloat{\includegraphics[width=0.33\textwidth]{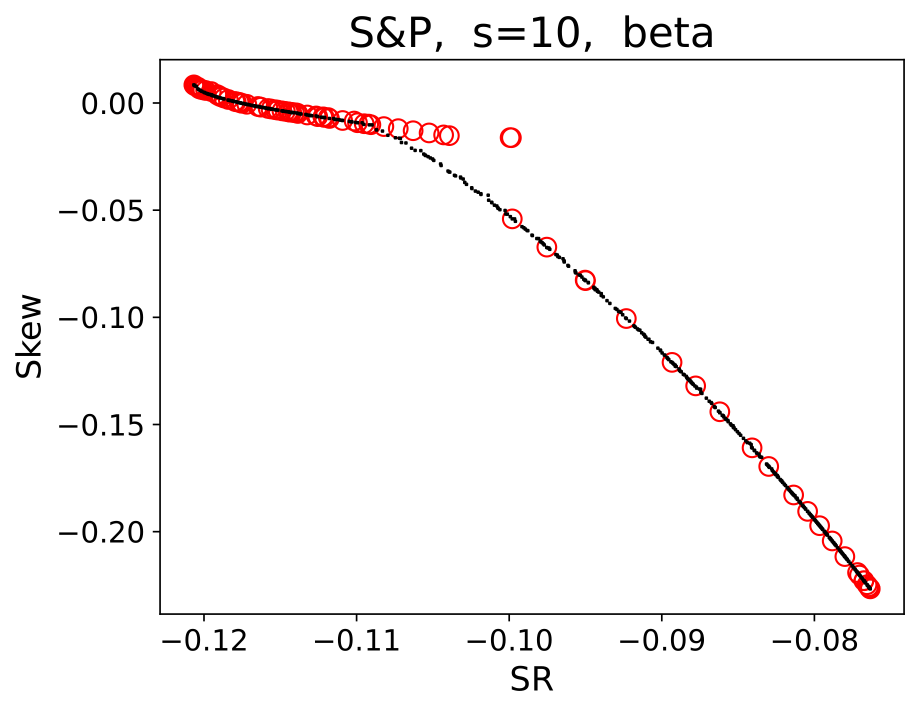}}
	\hfil
	\subfloat{\includegraphics[width=0.33\textwidth]{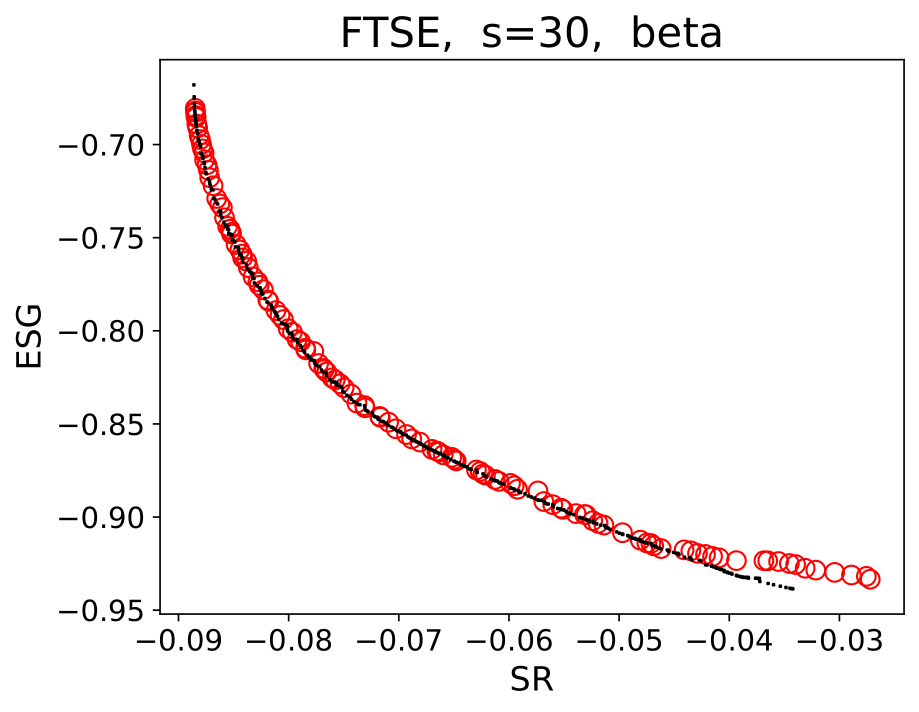}}
	\\
	\subfloat{\includegraphics[width=0.33\textwidth]{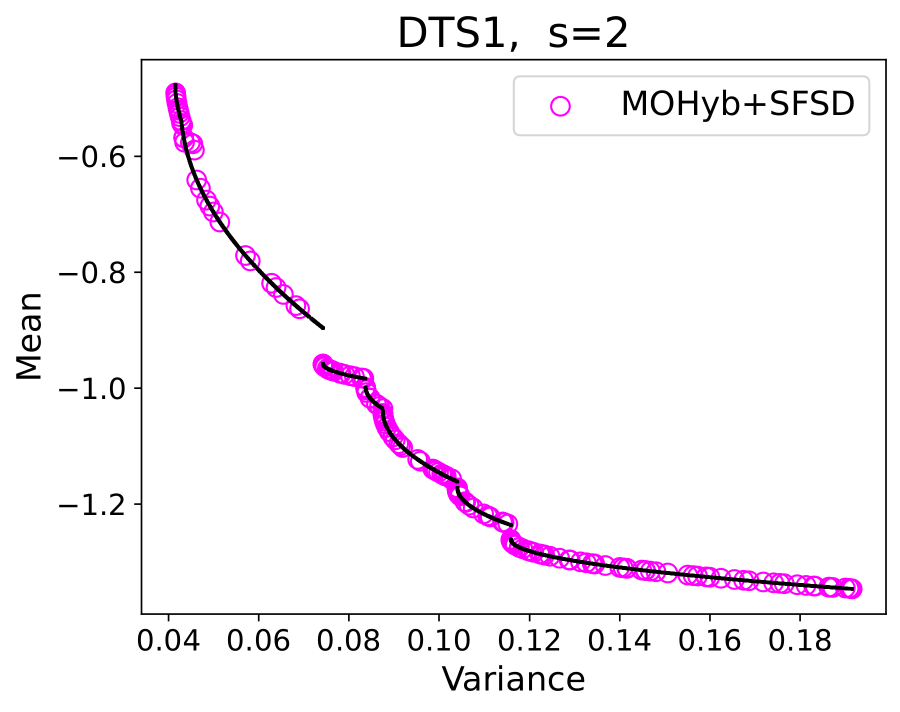}}
	\hfil
	\subfloat{\includegraphics[width=0.33\textwidth]{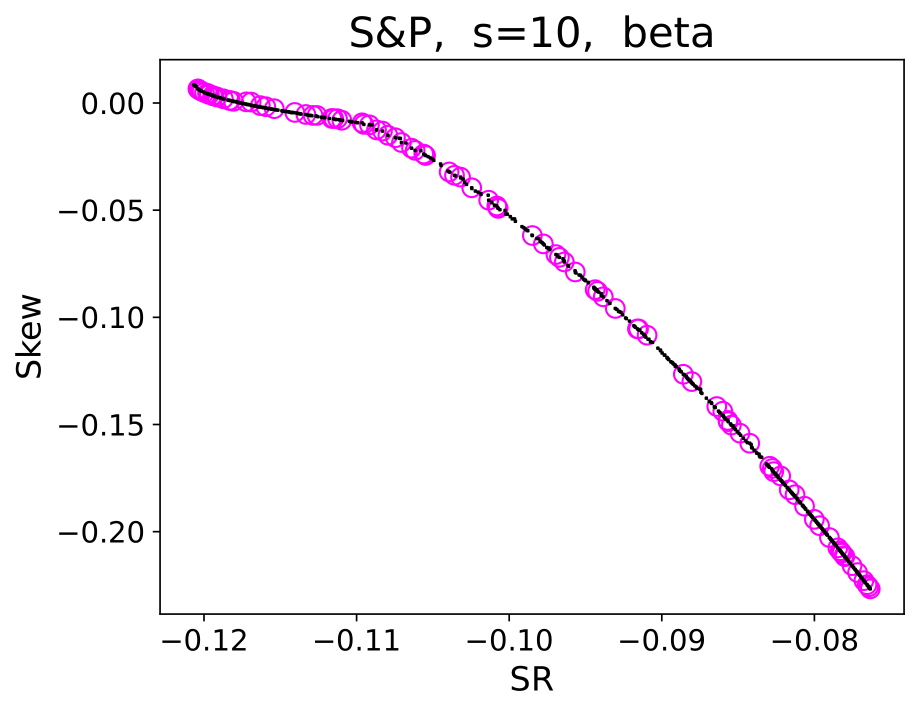}}
	\hfil
	\subfloat{\includegraphics[width=0.33\textwidth]{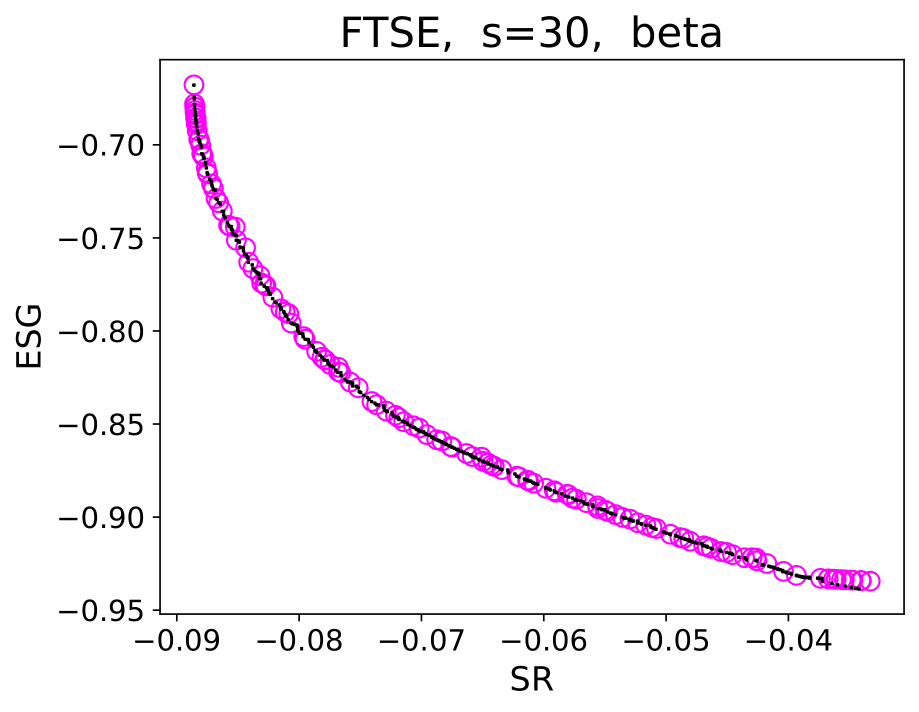}}
	\\
	\subfloat{\includegraphics[width=0.33\textwidth]{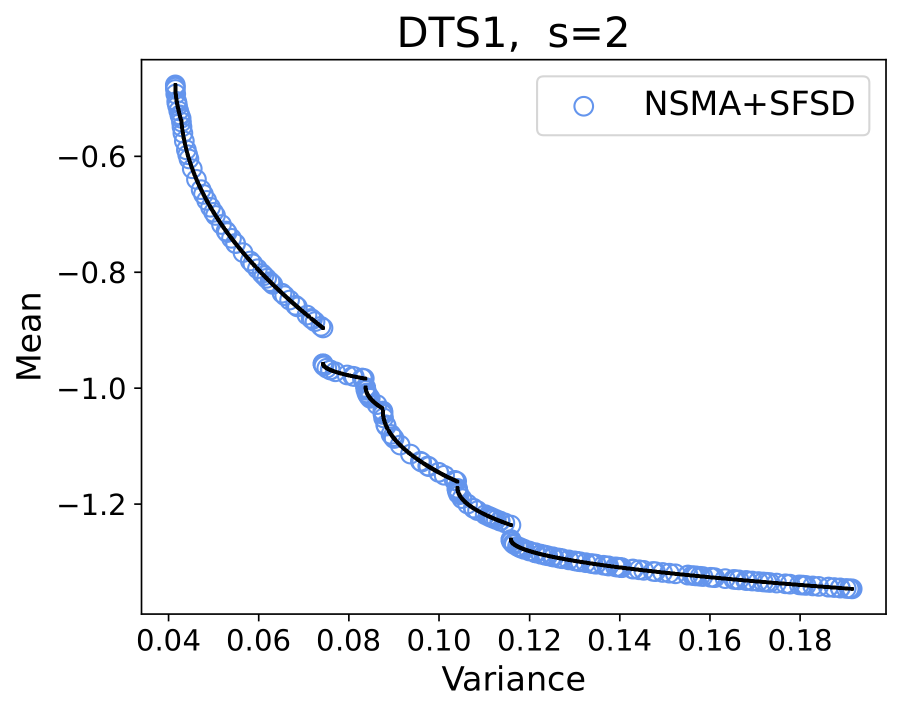}}
	\hfil
	\subfloat{\includegraphics[width=0.33\textwidth]{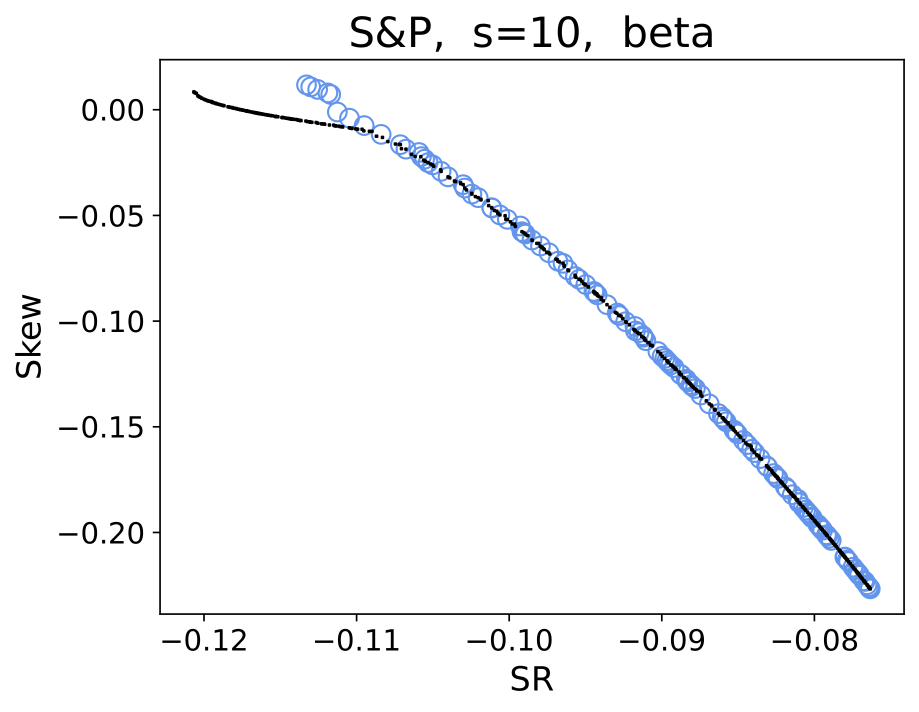}}
	\hfil
	\subfloat{\includegraphics[width=0.33\textwidth]{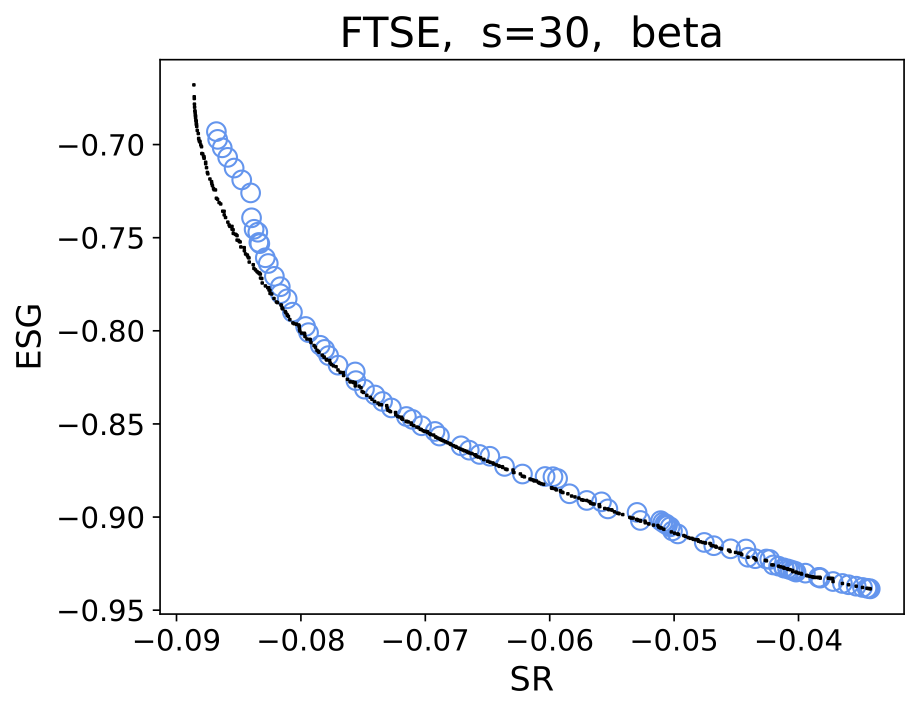}}
	\\
	\subfloat{\includegraphics[width=0.33\textwidth]{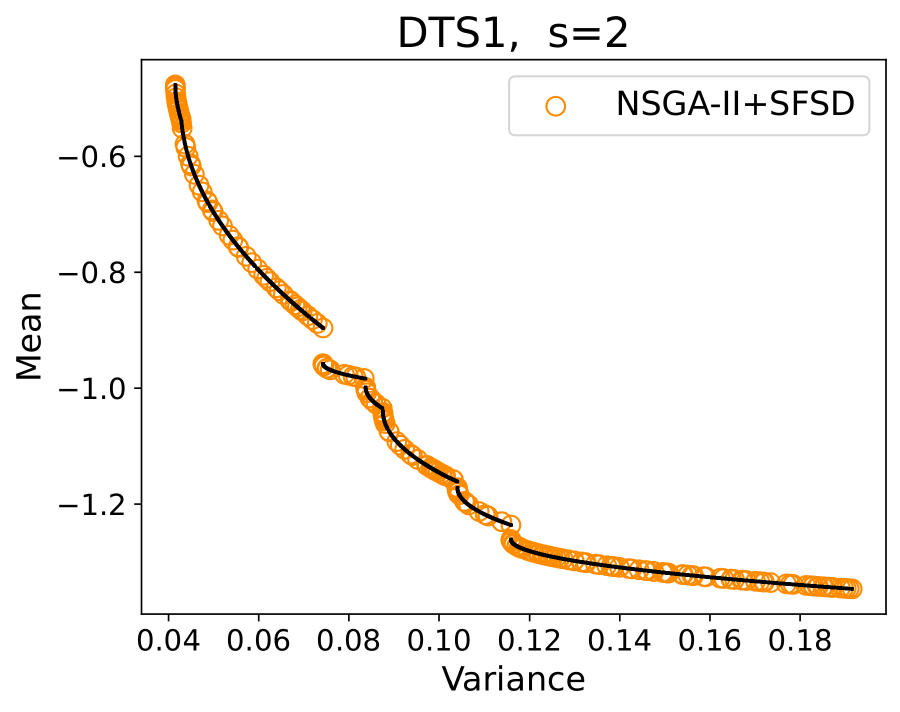}}
	\hfil
	\subfloat{\includegraphics[width=0.33\textwidth]{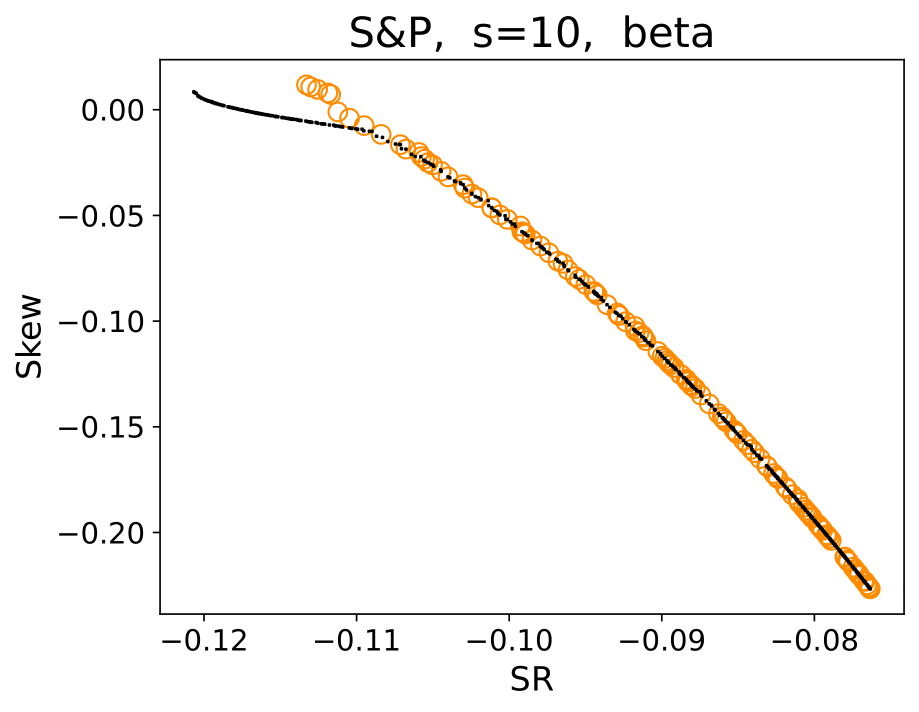}}
	\hfil
	\subfloat{\includegraphics[width=0.33\textwidth]{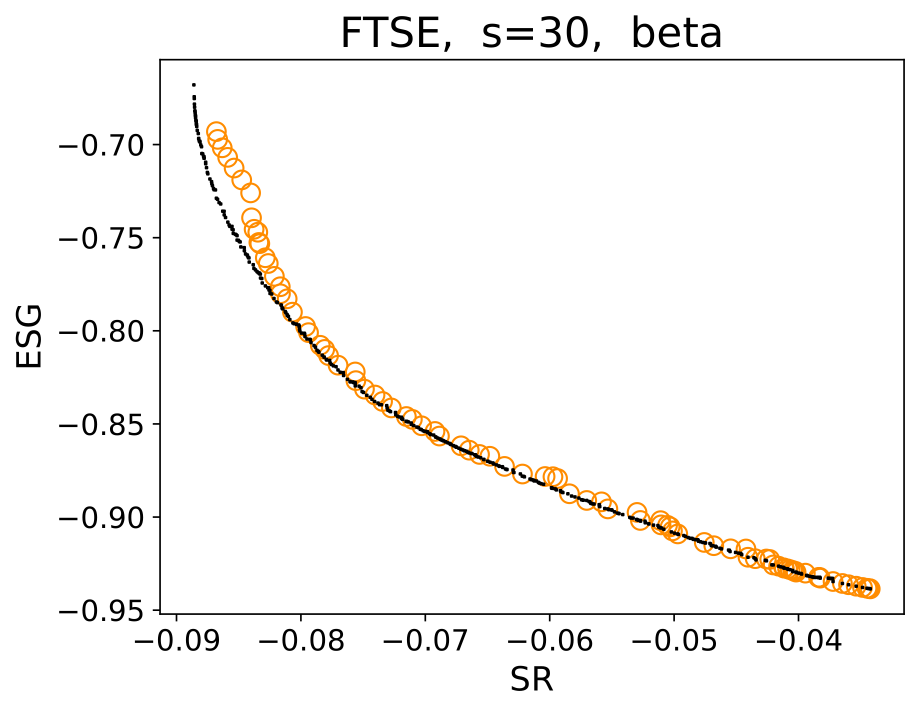}}
	\\
	\subfloat{\includegraphics[width=0.33\textwidth]{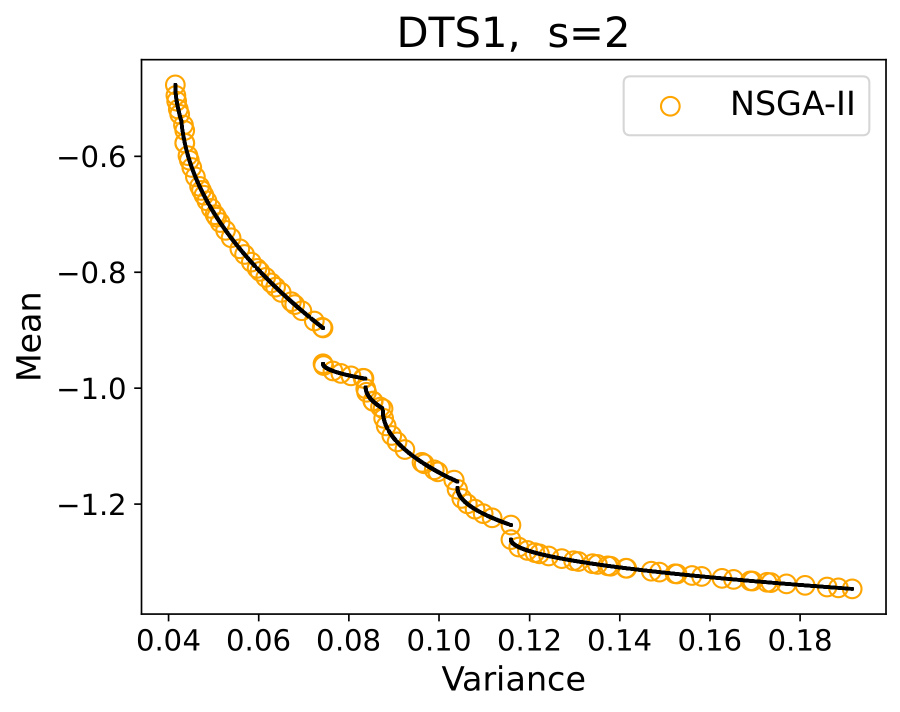}}
	\hfil
	\subfloat{\includegraphics[width=0.33\textwidth]{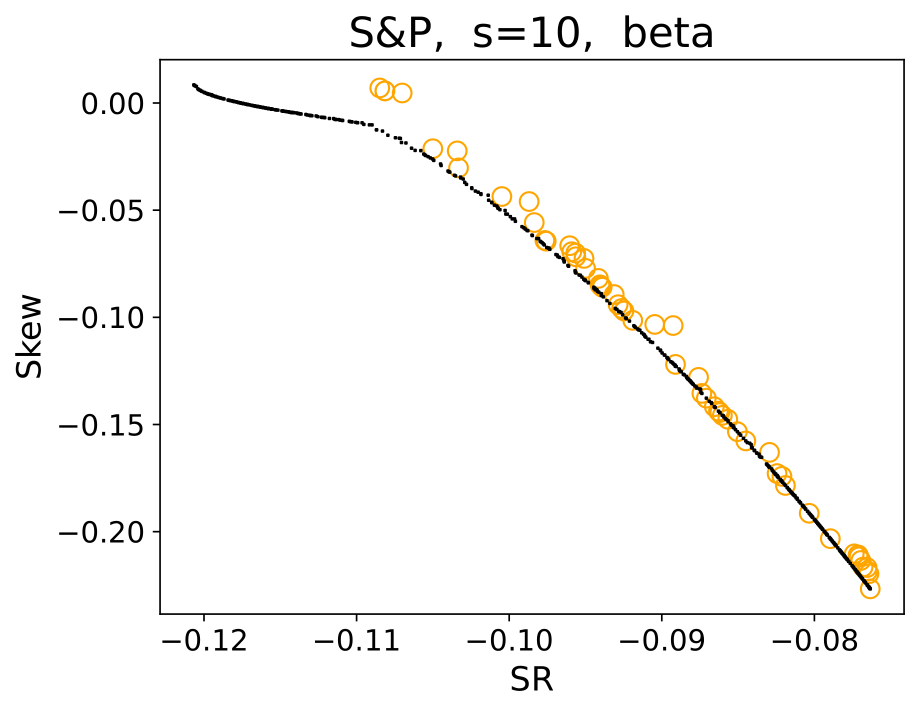}}
	\hfil
	\subfloat{\includegraphics[width=0.33\textwidth]{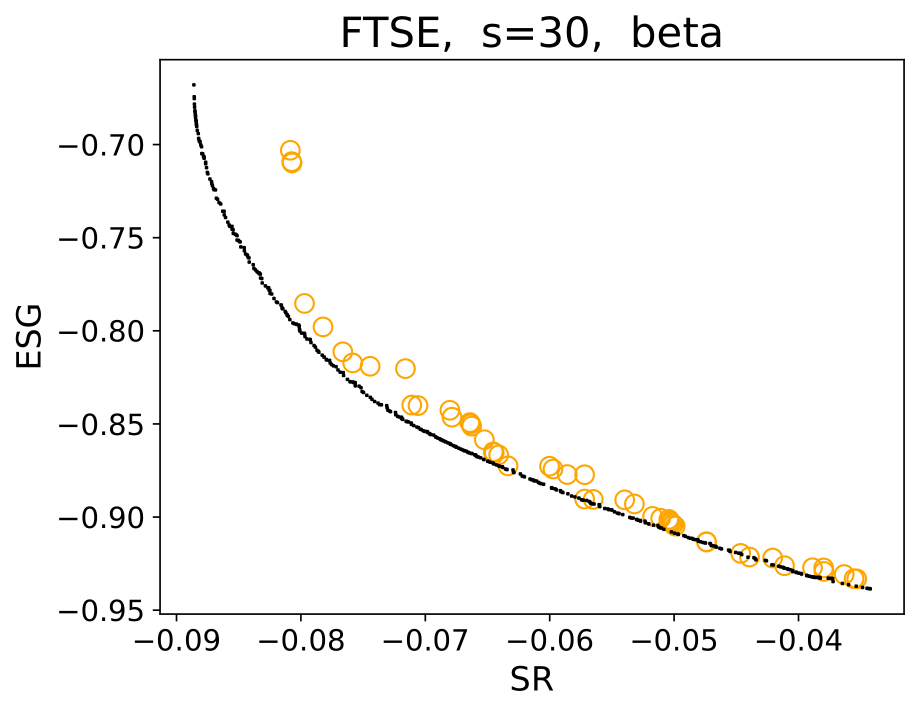}}
	\caption{Pareto front reconstruction obtained, on selected problems, with \texttt{SFSD} initialized using \texttt{MOHyb}, \texttt{NSGA-II}, \texttt{NSMA} and the \rev{linear} scalarization method; the result of vanilla \texttt{NSGA-II} is also reported. The small black dots represent the reference fronts.}
	\label{fig::SFSD_comp_all}
\end{figure}

The Pareto front reconstructions of the DTS1 mean-variance problem with $s=2$ are of particular interest: the \rev{linear} scalarization method is unable to obtain any of the solutions belonging to the central portion of the Pareto front. This behavior is in line with the discussion in section \ref{sec::limits_scalarization}, as some efficient solutions were not found not only because few values of the scalarization parameter $\lambda$ were tested, but also because some portion of the Pareto front might have been impossible to reach for any value of $\lambda$. 
In the S\&P SR-Skew problem with beta constraints and $s=10$, we observe that, without \texttt{SFSD}, the genetic algorithm \texttt{NSGA-II} struggles to obtain a good approximation of the Pareto front, even when running for 6 minutes, as in this problem not using the information of the gradient massively deteriorates the performances. This result confirms once more how \texttt{SFSD} is crucial, as \texttt{NSGA-II} with \texttt{SFSD} on the other hand is capable of obtaining a good front approximation.
  
In problems where the maximal cardinality $s$ is large we observe that \texttt{SFSD} with \texttt{MOHyb} is the most effective approach, as it is capable of reaching a more accurate approximation of the Pareto front. In particular, in the FTSE SR-ESG problem with beta constraints and $s=30$, it is able of obtaining the best Pareto front reconstruction among the methods considered. On the other hand, \texttt{SFSD} equipped with \texttt{NSGA-II} or \texttt{NSMA} obtains better results when $s$ is low. These numbers are in line with the observations reported in section \ref{subsec::phase_1}.

\subsection{Overall evaluation}
As a final step, we show the performance of \texttt{SFSD} (initialized either with \texttt{MOHyb} or \texttt{NSMA}) compared to state-of-the-art approaches, namely the \rev{linear} scalarization and \texttt{NSGA-II}, in the full benchmark of 442 problems \rev{\label{rev1.2.c}involving two or more objective functions}. Given the large number of problems, we make use of performance profiles \cite{Dolan2002} to summarize the outcome of the experiments. 

\rev{\label{rev2_metrics}We consider standard metrics in our comparison: \textit{purity}, \textit{$\Gamma$-spread} \cite{custodio11} and \textit{hypervolume} \cite{zitzler98}. In particular, the \textit{purity} metric indicates the ratio of the number of non-dominated points that a solver obtained w.r.t.\ its competitors over the number of the points produced by that solver, thus serving as a quality measure of the generated front. Clearly, a higher value is related to a better performance. The \textit{$\Gamma$-spread} metric measure how well the generated front is distributed in the objectives space: it is defined as the maximum $\ell_\infty$ distance between adjacent points of the Pareto front. As opposed to the \textit{purity}, low values for the \textit{$\Gamma$-spread} metric are associated with good performance. Finally, the \textit{hypervolume} metric calculates the area/volume which is dominated by the provided set of solutions with respect to a reference point. The latter is chosen so that each of its coordinates is slightly worse than the worst value obtained by any compared solver on the corresponding objective. Similar to \textit{purity}, higher values for the \textit{hypervolume} metric mean better performance.}

The setup of the experiment is the same of Section \ref{sec:sfsd_impact}. We split our analysis again by separately considering the 210 problems with $s\le10$, and the remaining 232 with $s>10$. 

\begin{figure}
	\centering
	\subfloat{\includegraphics[width=0.32\textwidth]{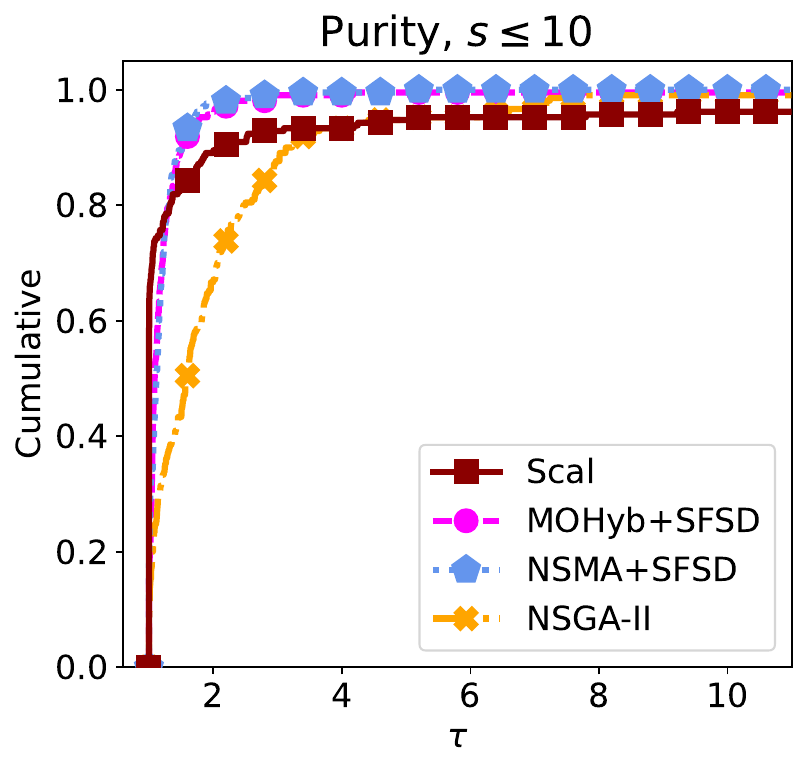}}
	\hfil
	\subfloat{\includegraphics[width=0.32\textwidth]{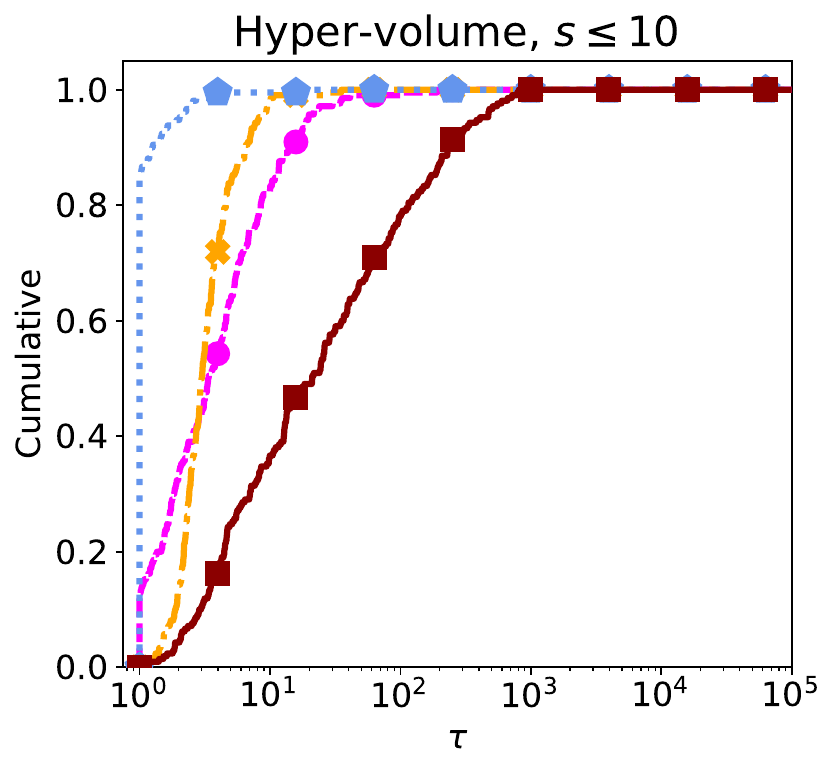}}
	\hfil
	\subfloat{\includegraphics[width=0.32\textwidth]{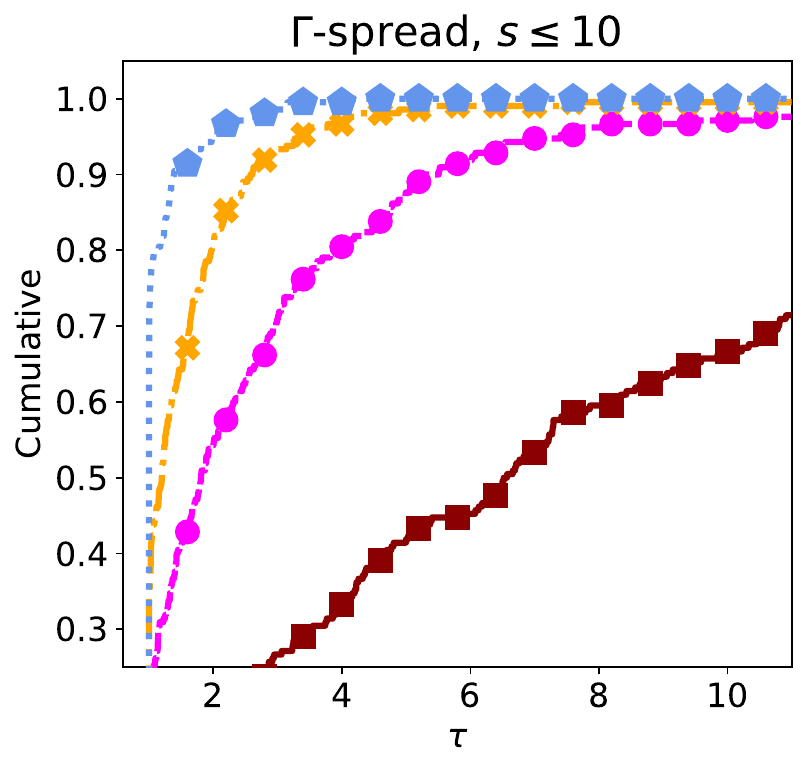}}
	\\
	\subfloat{\includegraphics[width=0.32\textwidth]{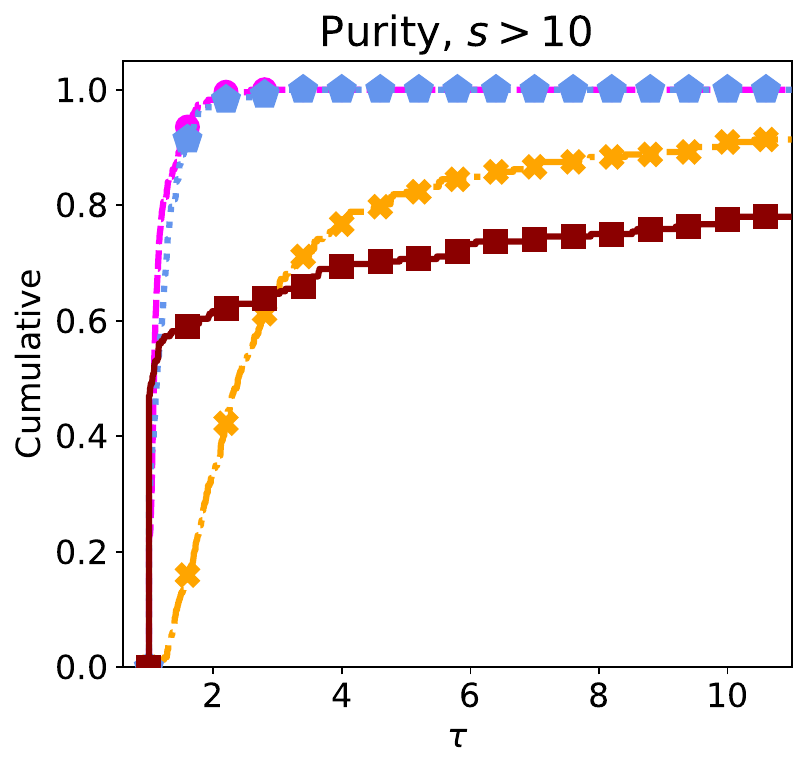}}
	\hfil
	\subfloat{\includegraphics[width=0.32\textwidth]{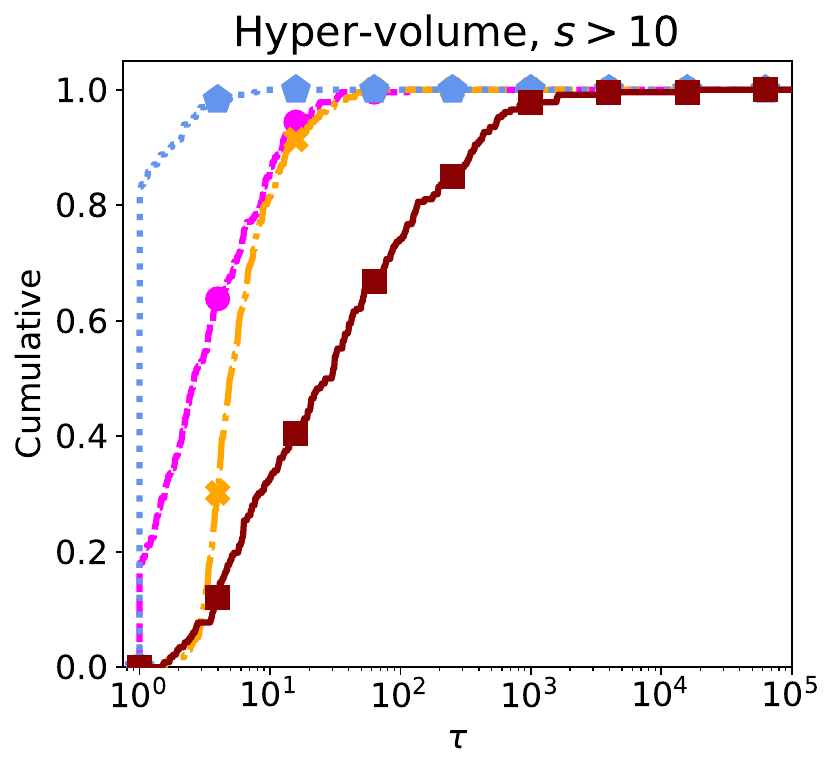}}
	\hfil
	\subfloat{\includegraphics[width=0.32\textwidth]{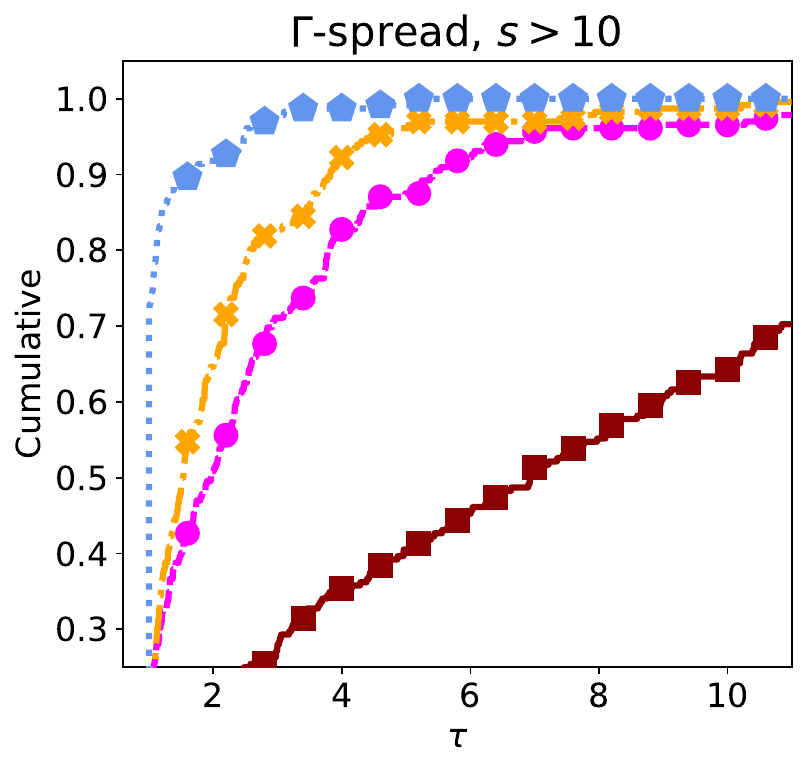}}
	\caption{Performance profiles of \textit{purity}, \textit{Hyper-volume} and \textit{$\Gamma$-spread} for \texttt{MOHyb} and \texttt{NSMA}, with \texttt{SFSD}, and for the \rev{linear} scalarization approach and vanilla \texttt{NSGA-II} \rev{on the full benchmark of 442 problems involving two or more objective functions}. We analyze, in the first row, problems with $s \leq 10$; in the second row, problems with $s > 10$.}
	\label{fig::PP}
\end{figure}

We show in Figure \ref{fig::PP} the outcome of this analysis. We can observe that \texttt{SFSD} clearly outperforms vanilla \texttt{NSGA-II} across all metrics. While both initialization strategies appear to provide solid purity values, the employment of \texttt{NSMA} seems to be a more efficient choice overall. Somewhat unsurprisingly, the \rev{linear} scalarization method falls far behind the other approaches, exhibiting once again its limitations in portfolio optimization. 

\section{Conclusions}
\label{sec::conclusions}

In this paper, we studied solution approaches for the multi-objective portfolio selection problem with sparsity constraints. After pointing out some serious limitations of the scalarization strategy, we proposed a composite framework, combining and suitably adapting to constrained problems some state-of-the-art methods, that is specifically designed to retrieve a well spanned front of efficient solutions.

From the experimental analysis reported in this manuscript, it is clear that the proposed Sparse Front Steepest Descent method is a very effective algorithm for completing the Pareto front of possible investment portfolios, when both convex and non-convex \rev{\label{rev1.2.d}(multiple)} objective functions are considered, compared to both pure evolutionary and \rev{linear} scalarization methods. While we found that feeding the method with starting solutions found by the memetic \texttt{NSMA} procedure apparently leads to the best results, we expect that using starting points produced by the combination of different strategies could even bring better Pareto front approximations, even though at the cost of a larger computational burden.

\rev{Future work may focus on integrating discrete optimization strategies into a two-stage approach to alternately identify supported and unsupported solutions, similarly to what is done in \cite{PRZYBYLSKI2010149}. Moreover, a numerical investigation of algorithmic frameworks with performance guarantees for approximating the Pareto front, inspired by the work in \cite{Papadimitriou2000}, could be of interest.}

\backmatter


%
%


%

\section*{Declarations}

\subsubsection*{Code Availability}
The implementation code of the methodologies proposed in this paper can be found at \href{https://github.com/dadoPuccio/MO-Portfolio}{github.com/dadoPuccio/MO-Portfolio}.

\subsubsection*{Conflict of interest}
The authors declare that they have no conflict of interest.

%
%

%
%
%
%
%
\begin{appendices}
\section{Further Details on Descent Methods}
\label{app::descent_methods}

In this appendix, we report some description and theoretical properties of descent algorithms mentioned throughout the paper. Although the key concepts nearly are the same as those exposed in the referenced papers, in the setting here considered we dealt with new constraints and, thus, some adjustments are needed in the theory of the presented approaches.

\subsection{Multi-objective Sparse Penalty Decomposition}

The scheme of the \textit{Multi-objective Sparse Penalty Decomposition} (\texttt{MOSPD}) \cite{Lapucci2022apenalty} can be found in Algorithm \ref{alg::MOSPD}. Here below, we list the key steps of the approach.

\begin{algorithm}
	\caption{\textit{Multi-objective Sparse Penalty Decomposition} (\texttt{MOSPD})}\label{alg::MOSPD}
	\begin{algorithmic}[1]
		\Require $F:\mathbb{R}^n \rightarrow \mathbb{R}^m$, $x_0 = y_0 \in \Omega = \Omega_c \cap \{x \in \mathbb{R}^n \mid \|x\|_0 \le s\}$, $\tau_0 > 0$, $\sigma > 1$, $\{\varepsilon_k\} \subseteq \mathbb{R}_+$ s.t. $\varepsilon_k \rightarrow \infty$.
		\For{$k = 0,1,\ldots$}
		\State Let $Q^{\tau_k}(x, y_k) = F(x) + \mathbf{1}_m\frac{\tau_k}{2}\|x-y_k\|^2$ \label{line::def_Q}
		\State $\ell = 0$
		\State $x_\text{trial} =$ \texttt{MOPG}($Q^{\tau_k}(\cdot, y_k)$, $x_k$, $\varepsilon_k$) \label{line::start_test}
		\If{$Q^{\tau_k}(x_\text{trial}, y_k) \le F(x_0)$}
		\State Set $u_0, v_0 = x_k, y_k$
		\Else
		\State Set $u_0, v_0 = x_0, y_0$ \label{line::end_test}
		\EndIf
		\While{$\min\limits_{d \in \mathcal{D}_{\Omega_c}(u_\ell)}\max\limits_{j \in \{1,\ldots,m\}}\nabla_uq^{\tau_k}_j(u_\ell, v_\ell)^\top d + \frac{1}{2}\|d\|^2 < -\varepsilon_k$ \label{line::start_alt}}
		\State $u_{\ell+1} =\ $\texttt{MOPG}($Q^{\tau_k}(\cdot, v_\ell)$, $u_\ell$, $\varepsilon_k$) \label{line::MOPG}
		\State $v_{\ell+1} = \argmin\limits_{v \in \mathbb{R}^n, v\ge\mathbf{0}_n, \|v\|_0 \le s}\|u_{\ell+1} - v\|^2$ \label{line::proj}
		\State $\ell = \ell + 1$ \label{line::end_alt}
		\EndWhile
		\State $\tau_{k+1} = \sigma\tau_k$
		\State $x_{k+1} = u_\ell$
		\State $y_{k+1} = v_\ell$
		\EndFor\\
		\Return sequence $\{(x_k, y_k)\}$
	\end{algorithmic}
\end{algorithm}

\begin{itemize}
	\item The algorithm starts at a point $x_0$ which is feasible for problem \eqref{eq::port-prob}. Here, we introduce the notation $\Omega_c = \{x \in \mathbb{R}^n \mid Ax \le b,\; \mathbf{1}_n^\top x = 1,\; x \ge \mathbf{0}_n\}$ to indicate the feasible set induced by the convex constraints in problem \eqref{eq::port-prob}.
	\item As already anticipated in section \ref{subsec::phase1}, the method aims to find, at each iteration $k$, a pair $(x_{k+1}, y_{k+1})$ such that $y_{k+1}$ is the projection of $x_{k+1}$ onto the sparse set and $x_{k+1}$ is approximately Pareto stationary w.r.t.\ $Q^{\tau_k}(\cdot, y_{k+1})$ with Pareto stationarity approximation degree $\varepsilon_k \to 0$ and $\tau_k \to \infty$ for $k \to \infty$. The goal is attained employing the \textit{Multi-objective Projected Gradient} (\texttt{MOPG}) \cite{Drummond2004} method in an \textit{alternate minimization scheme}.
	\item Before entering in the \textit{alternate minimization loop}, a test (lines \ref{line::start_test}-\ref{line::end_test}) is carried out to ensure that every next iterate will stay inside an appropriate level set; more information on this scheme part can be found in \cite{Lapucci2022apenalty}.
	\item In an alternate minimization fashion, in lines \ref{line::start_alt}-\ref{line::end_alt} we employ a run of \texttt{MOPG} on $Q^{\tau_k}$ w.r.t. $x$ (line \ref{line::MOPG}) and a projection operator onto the sparse feasible set (line \ref{line::proj}); when the resulting solution is approximately Pareto-stationary on $Q^{\tau_k}$ w.r.t.\ $x$ (line \ref{line::start_alt}), we then increase $\tau_k$ and decrease the Pareto stationarity approximation degree $\varepsilon_k$ for the next iteration.
\end{itemize}

In order to state the convergence properties of \texttt{MOSPD}, we need the following assumption.
\begin{assumption}
	\label{ass::bounded_set}
	$F$ has bounded level sets in the multi-objective sense, i.e., the set $\mathcal{L}_F(z) = \{x \in \Omega \mid F(x) \le z\}$ is bounded for any $z \in \mathbb{R}^m$.
\end{assumption}

Moreover, we have to state a technical lemma.
\begin{lemma}
	\label{lem::tech}
	Let $D \subset \mathbb{R}^n$ be a (\rev{possibly} non-convex) set such that if $d \in D$ then $td \in D$ for all $t \in [0, 1]$, $A \in \mathbb{R}^{p \times n}$, $x \ge \mathbf{0}_n$ and $F: \mathbb{R}^n \rightarrow \mathbb{R}^m$ be a continuously differentiable function. If
	\begin{equation}
		\label{eq::first_lemma}
		\begin{aligned}
			\min_{d \in D}\;&\max_{j \in \{1,\ldots, m\}} \nabla f_j(x)^\top d + \frac{1}{2}\|d\|^2 = 0
			\\\text{s.t. }& A(x+d) \le b, \quad \mathbf{1}_n^\top(x+d) = 1, \quad x+d \ge \mathbf{0}_n,
		\end{aligned}
	\end{equation}
	then
	\begin{equation}
		\label{eq::second_lemma}
		\begin{aligned}
			\min_{d \in D}\;&\max_{j \in \{1,\ldots, m\}} \nabla f_j(x)^\top d + \frac{1}{2}\|d\|^2 = 0
			\\\text{s.t. }& a_i^\top d \le 0 \quad \forall i: a_i^\top x=b \\& \mathbf{1}_n^\top d = 0, \quad d_i \ge 0 \quad \forall i: x_i = 0.
		\end{aligned}
	\end{equation}
\end{lemma}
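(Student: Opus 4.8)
The plan is to display the two problems as minimizations of the \emph{same} function $\phi(d) = \max_{j}\nabla f_j(x)^\top d + \tfrac12\|d\|^2$ over two different feasible sets, and then to show that, as far as small steps are concerned, the two sets are interchangeable. Write $C_1 = \{d : A(x+d)\le b,\ \mathbf 1_n^\top(x+d)=1,\ x+d\ge \mathbf 0_n\}$ for the feasible set of \eqref{eq::first_lemma}, and $C_2 = \{d : a_i^\top d\le 0\ \forall i\!:\!a_i^\top x=b_i,\ \mathbf 1_n^\top d=0,\ d_i\ge 0\ \forall i\!:\!x_i=0\}$ for that of \eqref{eq::second_lemma}. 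Since $D$ is nonempty and star-shaped with respect to the origin, $0\in D$; as $x$ is feasible we also have $0\in C_1\cap C_2$, and $\phi(0)=0$. Hence both minima are already $\le 0$, so it suffices to prove that the minimum in \eqref{eq::second_lemma} is $\ge 0$, i.e.\ that $\phi(d)\ge 0$ for every $d\in D\cap C_2$.

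First I would identify $C_2$ as the cone of feasible directions of the polyhedron $\Omega_c$ at $x$: examining the constraints one by one, $d\in C_2$ if and only if there is $\bar t(d)>0$ with $x+td\in\Omega_c$ for all $t\in[0,\bar t(d)]$. Indeed, inactive inequalities $a_i^\top x<b_i$ tolerate any sufficiently small perturbation; active ones $a_i^\top x=b_i$ survive exactly when $a_i^\top d\le 0$; the equality $\mathbf 1_n^\top(x+td)=1$ forces $\mathbf 1_n^\top d=0$; and $x_i+td_i\ge 0$ for small $t$ is automatic when $x_i>0$ and requires $d_i\ge 0$ when $x_i=0$. This is the standard polyhedral fact that the tangent cone equals the linearized cone, and it is the bridge between the ``local feasibility'' description in $C_1$ and the ``linearized'' description in $C_2$.

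The core of the argument is then a scaling/contradiction step. Suppose the minimum in \eqref{eq::second_lemma} were strictly negative; then there is $\bar d\in D\cap C_2$ with $\phi(\bar d)<0$, so necessarily $\bar d\ne 0$, and writing $g=\max_j\nabla f_j(x)^\top\bar d$ the inequality $g+\tfrac12\|\bar d\|^2<0$ forces $g<0$. Now scale $\bar d$ down: for every $t\in(0,1]$ we have $t\bar d\in D$ by star-shapedness; for every $t\in(0,\bar t(\bar d)]$ we have $x+t\bar d\in\Omega_c$, i.e.\ $t\bar d\in C_1$; and, since $t>0$, $\phi(t\bar d)=t\,g+\tfrac{t^2}{2}\|\bar d\|^2=t\bigl(g+\tfrac{t}{2}\|\bar d\|^2\bigr)$, which is negative for all sufficiently small $t$ because $g<0$. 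Choosing $t$ small enough to satisfy all three requirements yields a point $t\bar d\in D\cap C_1$ with $\phi(t\bar d)<0$, contradicting the hypothesis that the minimum in \eqref{eq::first_lemma} equals $0$ (which in particular makes $\phi\ge 0$ on all of $D\cap C_1$). Hence no such $\bar d$ exists, and combining $\ge 0$ with the earlier $\le 0$ gives that the minimum in \eqref{eq::second_lemma} is exactly $0$.

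The hard part — and the reason the star-shapedness hypothesis on the possibly non-convex $D$ is exactly what is needed — is that the whole reduction relies on staying inside $D$ while shrinking the step, and $t\mapsto t\bar d$ with $t\in[0,1]$ is precisely the one family of operations under which $D$ is assumed closed; no convexity of $D$ is available or used. The only other delicate point is that scaling must preserve strict descent, which is guaranteed by the linear term dominating the quadratic for small $t$, i.e.\ by $g<0$; this is where the factorization $\phi(t\bar d)=t(g+\tfrac t2\|\bar d\|^2)$ is used and where the assumption $\phi(\bar d)<0$ (rather than merely $\le 0$) pays off.
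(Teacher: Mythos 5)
Your proof is correct and takes essentially the same route as the paper's: both argue by contradiction, picking a direction with strictly negative objective value in \eqref{eq::second_lemma} and scaling it by a small $t>0$ so that star-shapedness keeps it in $D$ while the feasible-direction (linearized-cone) characterization of the constraints places it in the feasible set of \eqref{eq::first_lemma}, with the objective still negative --- contradicting the hypothesis. The only differences are cosmetic: you avoid invoking attainment of the minimizer and you spell out the polyhedral tangent-cone identification and the sign bookkeeping ($g<0$, factorization of $\phi(t\bar d)$) that the paper treats as ``easy to prove.''
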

\begin{proof}
    \rev{By contradiction, let us assume that \eqref{eq::second_lemma} is not satisfied, and let
    \begin{equation*}
        \begin{aligned}
            \tilde{d} \in \argmin_{d \in D}\;&\max_{j \in \{1,\ldots, m\}} \nabla f_j(x)^\top d + \frac{1}{2}\|d\|^2
            \\\text{s.t. }& a_i^\top d \le 0 \quad \forall i: a_i^\top x=b \\& \mathbf{1}_n^\top d = 0, \quad d_i \ge 0 \quad \forall i: x_i = 0.
        \end{aligned}
    \end{equation*}}

    First, note that the constraints (excluding the set $D$) in problem \eqref{eq::second_lemma} constitute the feasible direction set $\mathcal{D}_{\Omega_c}(x)$. Let us denote as $\tilde{\mathcal{D}}(x)$ the set formed by the related constraints in problem \eqref{eq::first_lemma}. It is easy to prove that $\tilde{\mathcal{D}}(x) \subseteq \mathcal{D}_{\Omega_c}(x)$, $\mathbf{0}_n \in \tilde{\mathcal{D}}(x)$ and, finally, both sets are convex. \rev{Thus, there exists $0 < t < 1$ sufficiently small such that $t\tilde{d} \in \tilde{\mathcal{D}}(x)$. Moreover, by definition of the set $D$, $\tilde{d} \in D$ implies that $t\tilde{d} \in D$, concluding that $t\tilde{d}$ is feasible for problem \eqref{eq::first_lemma}. But, by definition of $\tilde{d}$ and $t$, $\max_{j \in \{1,\ldots, m\}}\nabla f_j(x)^\top(t\tilde{d}) + \frac{1}{2}\|t\tilde{d}\|^2 = t^2(\frac{\max_{j \in \{1,\ldots, m\}}\nabla f_j(x)^\top\tilde{d}}{t} + \frac{1}{2}\|\tilde{d}\|^2) < 0$. Combining the last two statements, we contradict \eqref{eq::first_lemma} and, thus, we get the thesis.}
\end{proof}

We are finally ready to state the convergence properties of \texttt{MOSPD}.
\begin{lemma}
	Let Assumption \ref{ass::bounded_set} hold and $\{(x_k, y_k)\}$ be the sequence generated by Algorithm \ref{alg::MOSPD}. Then, $\{(x_k, y_k)\}$ admits cluster points. Moreover, let $(\bar{x}, \bar{y})$ a limit point of the sequence, i.e., there exists $K \subseteq \{0,1,\ldots\}$ such that $(x_k, y_k) \rightarrow (\bar{x}, \bar{y})$ for $k \rightarrow \infty$, $k \in K$. We have the following conditions hold:
	\begin{enumerate}
		\item $\bar{x}$ is feasible for problem \eqref{eq::port-prob};
		\item $\bar{x}$ is Pareto stationary in a subspace associated to one of its super support set for problem \eqref{eq::port-prob}.
	\end{enumerate}
\end{lemma}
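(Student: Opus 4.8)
The plan is to prove the three assertions in order, relying on the level-set control built into the scheme, on the explicit geometry of the nonnegative sparse projection, and on the technical Lemma~\ref{lem::tech} to handle the non-smooth behaviour of the feasible-direction cone in the limit. First I would establish existence of cluster points. Every iterate $x_{k+1}$ returned by \texttt{MOPG} lies in $\Omega_c$, which is compact because of the simplex constraint $\mathbf{1}_n^\top x=1$, $x\ge\mathbf{0}_n$ (this is also guaranteed by Assumption~\ref{ass::bounded_set}); since each $y_{k+1}$ is a projection of $x_{k+1}$, the sequence $\{(x_k,y_k)\}$ is bounded and admits cluster points by the Bolzano--Weierstrass theorem.

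Next, for feasibility of a limit point $(\bar x,\bar y)$, I note that each $x_k\in\Omega_c$ and $\Omega_c$ is closed, so $\bar x\in\Omega_c$. The level-set control $Q^{\tau_k}(x_{k+1},y_{k+1})\le F(x_0)$ enforced by the test at lines \ref{line::start_test}--\ref{line::end_test} (as in \cite{Lapucci2022apenalty}) yields, componentwise, $\tfrac{\tau_k}{2}\|x_{k+1}-y_{k+1}\|^2\le f_j(x_0)-f_j(x_{k+1})$; as each $f_j$ is bounded below on the compact set $\Omega_c$ and $\tau_k\to\infty$, this forces $\|x_{k+1}-y_{k+1}\|\to0$. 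Since every $y_{k+1}$ is $s$-sparse and the $s$-sparse set is closed, the limit satisfies $\|\bar x\|_0\le s$, so $\bar x$ is feasible for \eqref{eq::port-prob}.

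The core of the argument is subspace stationarity of $\bar x$. Here I would first exploit the projection at line~\ref{line::proj}: because $x_{k+1}\ge\mathbf{0}_n$, the nonnegative sparse projection leaves the retained components unchanged and zeros the rest, so $(x_{k+1}-y_{k+1})_i=0$ for all $i\in S_1(y_{k+1})$; choosing a super support set $J_{k+1}\in\mathcal{J}(y_{k+1})$ and observing that whenever $\|y_{k+1}\|_0<s$ one has $y_{k+1}=x_{k+1}$ (all positive entries are retained), it follows that $(x_{k+1}-y_{k+1})_i=0$ for \emph{every} $i\in J_{k+1}$. Passing to a subsequence along which the finitely many possible super support sets stabilize to a fixed $J$, I obtain $S_1(\bar x)\subseteq J$, hence $J\in\mathcal{J}(\bar x)$. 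The exit of the loop at line~\ref{line::start_alt}, written with $\nabla_u q^{\tau_k}_j=\nabla f_j+\tau_k(x_{k+1}-y_{k+1})$, gives
\begin{equation*}
\min_{d\in\mathcal{D}_{\Omega_c}(x_{k+1})}\max_{j}\bigl(\nabla f_j(x_{k+1})+\tau_k(x_{k+1}-y_{k+1})\bigr)^\top d+\tfrac12\|d\|^2\ge-\varepsilon_k;
\end{equation*}
restricting the directions to the subspace $\{d\mid d_{\bar J}=\mathbf{0}\}$ can only raise the minimum and, by the observation above, annihilates the penalty term, leaving
\begin{equation*}
\min_{d\in\mathcal{D}_{\Omega_c}(x_{k+1}),\,d_{\bar J}=\mathbf{0}}\max_{j}\nabla f_j(x_{k+1})^\top d+\tfrac12\|d\|^2\ge-\varepsilon_k.
\end{equation*}

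Finally I would pass to the limit, and this is where the main obstacle lies: the feasible-direction cone $\mathcal{D}_{\Omega_c}(x)$ may shrink discontinuously as constraints activate at $\bar x$, so the above stationarity measure need not be continuous in $x$. To bypass this I would bound the feasible-direction minimum \emph{above} by the value $\Psi_J(x)$ of the companion problem in which the directions are constrained by $A(x+d)\le b$, $\mathbf{1}_n^\top(x+d)=1$, $x+d\ge\mathbf{0}_n$ and $d_{\bar J}=\mathbf{0}$ (a subset of the feasible directions, by convexity of $\Omega_c$). Because $d=\mathbf{0}$ is always feasible and $\Omega_c$ is polyhedral, $\Psi_J$ is continuous in $x$ by standard parametric-programming arguments, so $\Psi_J(x_{k+1})\ge-\varepsilon_k$ yields $\Psi_J(\bar x)\ge0$, whence $\Psi_J(\bar x)=0$ (the reverse inequality holding trivially via $d=\mathbf{0}$). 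Applying Lemma~\ref{lem::tech} with $D=\{d\mid d_{\bar J}=\mathbf{0}\}$ then converts this equality into
\begin{equation*}
\min_{d\in\mathcal{D}_{\Omega_c}(\bar x),\,d_{\bar J}=\mathbf{0}}\max_{j}\nabla f_j(\bar x)^\top d+\tfrac12\|d\|^2=0,
\end{equation*}
which is exactly Pareto stationarity of $\bar x$ in the subspace induced by the super support set $J\in\mathcal{J}(\bar x)$, completing the proof.
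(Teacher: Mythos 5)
Your proof is correct and takes essentially the same route as the paper's: the paper's own proof is a two-line deferral to \cite[Propositions 4.5, 4.7]{Lapucci2022apenalty} with Lemma~\ref{lem::tech} substituted for Lemma~4.6 of that reference, and your argument is exactly that argument written out in full --- level-set control forcing $\|x_{k+1}-y_{k+1}\|\to 0$ (hence cluster points and feasibility of limits), annihilation of the penalty term along directions supported on a stabilized super support set giving approximate subspace stationarity, and Lemma~\ref{lem::tech} invoked at the limit point to convert the ``$x+d\in\Omega_c$'' stationarity measure into the feasible-direction-cone one, which is precisely the substitution the paper prescribes. The two steps you treat briefly (upper semicontinuity of the parametric value $\Psi_J$, which indeed follows from polyhedrality/Hoffman-type error bounds, and the corner case of the level-set test when the inner loop performs zero iterations, which is resolved because \texttt{MOPG} returns its input when it is already $\varepsilon_k$-stationary) are standard and consistent with the cited analysis, so nothing essential is missing.
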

\begin{proof}
	The theses straightforwardly follow as in \cite[Propositions 4.5, 4.7]{Lapucci2022apenalty}, respectively, where Lemma \ref{lem::tech} can be used in place of Lemma 4.6 of the referenced paper.
\end{proof}
\begin{remark}
	The stationarity condition of \texttt{MOSPD} has been referred to as \textit{Multi-objective Lu-Zhang first-order} (MOLZ) conditions \cite{Lapucci2022apenalty}. In the cited paper, MOLZ conditions are shown to be weaker than standard Pareto stationarity for problem \eqref{eq::port-prob} (see Lemma \ref{lem::par-stat}): while the latter one implies MOLZ conditions, a point must satisfy MOLZ conditions for every associated super support set to be Pareto stationary.
\end{remark}

\subsection{Multi-objective Iterative Hard Thresholding}

The scheme of the \textit{Multi-objective Iterative Hard Thresholding} (\texttt{MOIHT}) \cite{Lapucci2024cardinality} is reported in Algorithm \ref{alg::MOIHT}. 

\begin{algorithm}
	\caption{\textit{Multi-objective Iterative Hard Thresholding} (\texttt{MOIHT})}\label{alg::MOIHT}
	\begin{algorithmic}[1]
		\Require $F:\mathbb{R}^n \rightarrow \mathbb{R}^m$, $x_0 \in \Omega$, $L > \max_{j \in \{1,\ldots,m\}}L(\nabla f_j)$.
		\State $k = 0$
		\While{$x_k$ is not $L$-stationary for problem \eqref{eq::port-prob}}
		\State Compute 
		\begin{equation}
			\label{eq::dir_l}
			\begin{aligned}
				v_L(x_k) \in \argmin_{d \in \mathbb{R}^n}\;&\max_{j \in \{1,\ldots,m\}} \nabla f_j(x_k)^\top d + \frac{L}{2}\|d\|^2
				\\\text{s.t. }& A(x_k+d) \le b, \quad \mathbf{1}_n^\top(x_k+d) = 1 \\& \|x_k + d\|_0 \le s, \quad x_k+d \ge \mathbf{0}_n.
			\end{aligned}
		\end{equation}
		\State $x_{k+1} = x_k + v_L(x_k)$
		\State Let $k = k + 1$
		\EndWhile\\
		\Return sequence $\{x_k\}$
	\end{algorithmic}
\end{algorithm}

For a brief description of the method mechanisms, we need first an assumption on the objective functions gradients.
\begin{assumption}
	\label{ass::lipschitz}
	For all $j \in \{1,\ldots,m\}$, $\nabla f_j$ is Lipschitz-continuous over $\mathbb{R}^n$ with constant $L(\nabla f_j)$, i.e., $\|\nabla f_j(x) - \nabla f_j(y)\| \le L(\nabla f_j)\|x - y\|$ for all $x, y \in \mathbb{R}^n$.
\end{assumption}

The algorithm starts from an initial feasible point $x_0 \in \Omega$ and iteratively updates the point until it is $L$-stationary, with $L$ being an input of the approach. At each iteration, the update is performed by solving problem \eqref{eq::dir_l}. Note that, by definition of the feasible set of problem \eqref{eq::dir_l}, all the iterates generated by \texttt{MOIHT} are feasible for problem \eqref{eq::port-prob}. Moreover, managing the cardinality constraint in problems like \eqref{eq::dir_l} is actually a practical operation to be carried out with mixed-integer quadratic solvers.

The convergence properties directly follows from the theoretical analysis of \texttt{MOIHT} in \cite{Lapucci2024cardinality}: indeed, the presence of constraints other than the cardinality one does not spoil any reported proof.
\begin{lemma}[{\cite[Proposition 4.1]{Lapucci2024cardinality}}]
	Let Assumptions \ref{ass::bounded_set}-\ref{ass::lipschitz} hold and $\{x_k\}$ be the sequence generated by Algorithm \ref{alg::MOIHT} with constant $L > \max_{j \in \{1,\ldots, m\}}L(\nabla f_j)$. Then, the sequence admits cluster points, each one being $L$-stationary for problem \eqref{eq::port-prob}.
\end{lemma}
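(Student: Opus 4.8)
The plan is to follow the standard blueprint for descent-type schemes, adapted to the hard-thresholding update and, crucially, to the presence of the additional linear constraints $Ax \le b$ that were absent in the original analysis. The argument naturally splits into a \emph{sufficient-decrease} part, which simultaneously yields the existence of cluster points and the vanishing of the steps, and a \emph{semicontinuity-type} part, which upgrades the vanishing of the steps into $L$-stationarity of every cluster point.

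First I would establish a sufficient-decrease inequality for each objective. Since $d = v_L(x_k)$ is optimal for problem \eqref{eq::dir_l} and $d = \mathbf{0}_n$ is feasible there (as $x_k \in \Omega$), the optimal value is nonpositive, giving $\max_{j}\nabla f_j(x_k)^\top v_L(x_k) + \frac{L}{2}\|v_L(x_k)\|^2 \le 0$ and hence $\nabla f_j(x_k)^\top v_L(x_k) \le -\frac{L}{2}\|v_L(x_k)\|^2$ for every $j$. Combining this with the descent lemma for $f_j$ (available under Assumption \ref{ass::lipschitz}) and $x_{k+1} = x_k + v_L(x_k)$, I would obtain
\begin{equation*}
    f_j(x_{k+1}) \le f_j(x_k) - \frac{L - L(\nabla f_j)}{2}\|v_L(x_k)\|^2,
\end{equation*}
which is a genuine decrease because $L > L(\nabla f_j)$. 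This shows $F(x_{k+1}) \le F(x_k)$, so the whole sequence lies in $\mathcal{L}_F(F(x_0))$, which is compact by Assumption \ref{ass::bounded_set} and the closedness of $\Omega$; cluster points therefore exist. Summing over $k$ and using that each monotone sequence $\{f_j(x_k)\}$ is bounded below on the compact sublevel set, the series $\sum_k \|v_L(x_k)\|^2$ converges, whence $\|v_L(x_k)\| \to 0$.

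The second part is where the cardinality constraint and the extra linear constraints must be handled with care. Let $\bar{x}$ be a cluster point, with $x_k \to \bar{x}$ along $K \subseteq \mathbb{N}$. Continuity and boundedness of the gradients on the compact sublevel set, together with $\|v_L(x_k)\| \to 0$, immediately give $\theta_L(x_k) \to 0$. Since $\theta_L(\bar{x}) \le 0$ always, it suffices to prove $\theta_L(\bar{x}) \ge 0$. Let $\bar{d}$ be any optimal solution of the problem defining $\theta_L(\bar{x})$, so that $\bar{x} + \bar{d} \in \Omega$, i.e.\ $A(\bar{x}+\bar{d}) \le b$, $\mathbf{1}_n^\top(\bar{x}+\bar{d}) = 1$, $\|\bar{x}+\bar{d}\|_0 \le s$ and $\bar{x}+\bar{d} \ge \mathbf{0}_n$. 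The key step is to introduce, for each $k \in K$, the direction $d_k = (\bar{x} + \bar{d}) - x_k$: by construction $x_k + d_k = \bar{x} + \bar{d} \in \Omega$, so $d_k$ is feasible for problem \eqref{eq::dir_l} at $x_k$, and moreover $d_k \to \bar{d}$ because $x_k \to \bar{x}$.

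Finally, by optimality of $v_L(x_k)$ and feasibility of $d_k$, I would write $\theta_L(x_k) \le \max_{j}\nabla f_j(x_k)^\top d_k + \frac{L}{2}\|d_k\|^2$ and pass to the limit along $K$, using continuity of each $\nabla f_j$ and $d_k \to \bar{d}$, to get $0 = \lim_{k \in K}\theta_L(x_k) \le \max_{j}\nabla f_j(\bar{x})^\top \bar{d} + \frac{L}{2}\|\bar{d}\|^2 = \theta_L(\bar{x})$, hence $\theta_L(\bar{x}) = 0$. I expect the main obstacle to be precisely this last limiting argument: the feasible set of \eqref{eq::dir_l} is non-convex and varies with the base point, so naive continuity of $\theta_L$ is unavailable. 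The device of targeting the fixed, already-feasible point $\bar{x} + \bar{d}$ is what neutralizes simultaneously the cardinality bound and the general linear constraints $Ax \le b$ -- since all four constraint families are inherited verbatim from the feasibility of $\bar{x} + \bar{d}$ -- thereby confirming that the additional constraints do not spoil the original proof.
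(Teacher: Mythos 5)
Your proof is correct. Note, however, that the paper itself does not spell out an argument for this lemma: it defers entirely to Proposition 4.1 of the cited work, adding only the one-line claim that ``the presence of constraints other than the cardinality one does not spoil any reported proof.'' Your blind reconstruction follows the standard blueprint that the cited proposition also uses -- optimality of $v_L(x_k)$ against the feasible zero direction plus the descent lemma yields per-objective sufficient decrease, hence compactness of the iterate set and $\|v_L(x_k)\|\to 0$, and then a limiting argument upgrades this to $\theta_L(\bar{x})=0$ -- so mathematically the two routes coincide. What your write-up adds, and what the paper leaves implicit, is the precise reason the additional linear constraints are harmless: expressed in terms of the target point $y = x_k + d$, the feasible set of problem \eqref{eq::dir_l} is the \emph{fixed} set $\Omega$, independent of the base point $x_k$, so the comparison direction $d_k = (\bar{x}+\bar{d}) - x_k$ is automatically feasible at every $x_k$ and converges to $\bar{d}$; this is exactly what rescues the lower-semicontinuity step despite the non-convex, point-dependent constraint set, and it works verbatim whether or not the constraints $Ax \le b$ are present. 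In that sense your proof is a genuine substantiation of the paper's unproved assertion rather than a different argument; the only cosmetic omissions are the (routine) remarks that $\bar{x}\in\Omega$ by closedness of $\Omega$, so that $\theta_L(\bar{x})\le 0$ holds, and that the minima in \eqref{eq::dir_l} and in the problem defining $\theta_L(\bar{x})$ are attained because the objective is coercive on a closed set.
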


\subsection{Sparse Front Steepest Descent}

The \textit{Sparse Front Steepest Descent} (\texttt{SFSD}) is already described in section \ref{subsec::SFSD}. Moreover, by definition of the constrained descent directions (problems \eqref{eq::common-dir}-\eqref{eq::partial-dir}) and the fact that the first tentative step size in the Armijo-type line searches is $\alpha_0 = 1$, most of theoretical properties of the approach, including the finite termination of the line searches, are already stated and proved in \cite{Lapucci2024cardinality, LAPUCCI2023242}. Here, we only report the proof of Proposition \ref{prop::conv-SFSD}, which must take into account the new feasible set $\Omega$.

\noindent \textbf{Proposition \ref{prop::conv-SFSD}.} \textit{Let us assume that $X^0_J$ is a set of mutually nondominated points and there exists $x_0^J \in X^0_J$ such that the set $\widehat{\mathcal{L}}_F(x_0^J) = \bigcup_{j=1}^m\{x \in \Omega \mid f_j(x) \le f_j(x_0^J)\}$ is compact. Moreover, let $X^k_J$ be the sequence of sets of nondominated points, associated with the super support set $J$, produced by Algorithm \ref{alg::SFSD}, and $x_{j_k}^J$ be a linked sequence. Then, the latter admits accumulation points, each one being Pareto stationary in the subspace associated with $J$ for problem \eqref{eq::port-prob}.}
\begin{proof}
	By instructions of the algorithms, each linked sequence $\{x_{j_k}^J\}$ can be seen as generated by applying front steepest descent \cite{lapucci2024effectivefrontdescentalgorithmsconvergence, LAPUCCI2023242} to the problem
	\begin{equation*}
		\begin{aligned}
			\min_{x \in \mathbb{R}^n}\;&F(x)
			\\\text{s.t. }& Ax \le b, \quad \mathbf{1}_n^\top x = 1 \\& x_{\bar{J}} = \mathbf{0}_{|\bar{J}|}, \quad x \ge \mathbf{0}_n,
		\end{aligned}
	\end{equation*}
	employing as descent directions the constrained ones proposed in equations \eqref{eq::common-dir}-\eqref{eq::partial-dir} and initial step size $\alpha_0=1$.
	Thus, following the same reasoning used for proof of \cite[Propositiom 3.4]{LAPUCCI2023242}, we can straightforwardly prove that each accumulation point $\bar{x}$ of the linked sequence $\{x_{j_k}^J\}$ is such that $\theta_{J_{x_c}}(\bar{x}) = 0$. Recalling now Lemma \ref{lem::tech}, we finally get that $\bar{x}$ is Pareto stationary in the subspace associated with $J$ for problem \eqref{eq::port-prob}.
\end{proof}

%




\end{appendices}


\bibliography{sn-article}

\end{document}